\newtheorem{theorem}{Theorem}
\newtheorem{proposition}[theorem]{Proposition}
\newtheorem{definition}{Definition}
\newcommand{\KwInit}[1]{\textbf{Initialization:} #1\\}
\newcommand{\BibTeX}{B\kern-.05em{\sc i\kern-.025em b}\kern-.08em\TeX}
\begin{document}


\begin{frontmatter}




\title{A Penalty-Based Guardrail Algorithm for Non-Decreasing Optimization with Inequality Constraints}


\author[1]{\fnms{Ksenija}~\snm{Stepanovic}\orcid{0000-0003-4273-4785}\thanks{Corresponding Author. Email: K.Stepanovic@tudelft.nl}}
\author[1]{\fnms{Wendelin}~\snm{Böhmer}\orcid{0000-0002-4398-6792}}
\author[1]{\fnms{Mathijs}~\snm{de Weerdt}\orcid{0000-0002-0470-6241}}

\address[1]{Delft University of Technology, Delft, The Netherlands}


\begin{abstract}
Traditional mathematical programming solvers require long computational times to solve constrained minimization problems of complex and large-scale physical systems. Therefore, these problems are often transformed into unconstrained ones, and solved with computationally efficient optimization approaches based on first-order information, such as the gradient descent method.
However, for unconstrained problems, balancing the minimization of the objective function with the reduction of constraint violations is challenging. We consider the class of time-dependent minimization problems with increasing (possibly) nonlinear and non-convex objective function and non-decreasing (possibly) nonlinear and non-convex inequality constraints. To efficiently solve them, we propose a penalty-based guardrail algorithm (PGA). This algorithm adapts a standard penalty-based method by dynamically updating the right-hand side of the constraints with a guardrail variable which adds a margin to prevent violations. We evaluate PGA on two novel application domains: a simplified model of a district heating system and an optimization model derived from learned deep neural networks. Our method significantly outperforms mathematical programming solvers and the standard penalty-based method, and achieves better performance and faster convergence than a state-of-the-art algorithm (IPDD) within a specified time limit.
\end{abstract}

\end{frontmatter}


\section{Introduction}
Many real-world physical systems, such as energy systems, can be formulated as large-scale optimization problems across multiple time steps, with decision variables coupled in inequality constraints~\cite{cannon2003dynamics}. The solution or set of solutions derived from an optimization should be feasible, that is, satisfy all constraints. Therefore, these problems are often modeled using a mathematical programming (MP) framework and solved with an off-the-shelf MP solver. Well-known examples of these solvers include Gurobi~\cite{gurobi}, SCIP~\cite{10.1145/3585516}, and BARON~\cite{nohra2021sdp}. Besides producing feasible solutions, the computational time of an optimization algorithm should usually remain low to be able to react in time. Concerning linear or small-scale models, MP solvers can meet this requirement. However, if models are complex or large, maintaining low computational time becomes a challenge~\cite{boyd2004convex}. Consequently, the research community has been exploring computationally more efficient optimization algorithms based on first-order information, such as the gradient descent method~\cite{amari1993backpropagation,tseng2009coordinate}.

However, optimization using the gradient descent method suffers from two challenges. The first challenge concerns finding feasible solutions in which the value of the objective function is near-optimal. A common approach involves transforming the constrained optimization problem into an unconstrained one by integrating constraints in the objective function using penalty-based~\cite{hestenes1969multiplier}, barrier~\cite{ben1997penalty} or augmented Lagrangian (AL)~\cite{hestenes1969multiplier,bertsekas2014constrained} approaches. In these problems, it is challenging to balance the minimization of the objective function and reduction of constraint violations~\cite{lu2022single}. Secondly, depending on the starting point of optimization, called the initial solution, optimization can converge to different solutions, some of which can be highly suboptimal. Common approaches to this challenge are the random-sampling shooting method~\cite{robust-mpc} and modeling the problem as a convex problem~\cite{chen2019optimal}.

As an example of a real-world physical system, consider a scenario in which an energy company aims to optimize the production of its power plant over a time horizon $T$. The objective is to minimize operational cost while satisfying consumers' heat demand. In this work, we consider the following class of time-dependent optimization problems with inequality constraints
\begin{align}
\begin{split}
&\min_{\mathbf{u} \in U \subseteq \mathbb{R}^T} J(\mathbf{u}) \quad \\ &\text{s.t.} \quad f_i(\mathbf{u}_i) \geq q_i, \quad i=1,\ldots,T 
\end{split}
\tag{P}
\label{eq:constrained_optimization_problem}
\end{align}
where the feasible set $U$ is the Cartesian product of $T$ closed sets: $U = \prod_{i=1}^{T}U_i$ with $U_i \subseteq \mathbb{R}$. The decision variable $\mathbf{u} \in U$ is decomposed as $\mathbf{u}=(u_{1},\ldots,u_{T})$ with the decision variable at the time step $i$, $u_i \in U_i$, $i=1,\dots, T$. Incorporating a moving time window approach~\cite{bertsekas2012dynamic}, the decision variable $\mathbf{u}_i  \in U_i^{n_w+1}$, $U_i^{n_w+1} = \prod_{j=0}^{n_w} U_{i-j}$, consists of $n_w$ preceding decision variables and the current decision variable, $\mathbf{u}_i = (u_{i-n_w},\ldots,u_i)$. The objective function $J(\mathbf{u}): \mathbb{R}^T \rightarrow \mathbb{R}$ is defined as $J(\mathbf{u})=\sum_{i=1}^{T}J_i(u_i)$, with the objective function at each time step $i$, $J_i(u_i): \mathbb{R} \rightarrow \mathbb{R}$ being increasing in all decision variables. Functions $f_i(\mathbf{u}_i):\mathbb{R}^{n_w+1} \rightarrow \mathbb{R}$ are non-decreasing in all decision variables, and their values should be greater than or equal to the right-hand side of the constraints, constants $q_i \in \mathbb{R}$. All functions are (possibly) nonlinear and non-convex, and continuously differentiable. We also assume that solutions to the optimization problem exist when all inequality constraints are converted into equality constraints.

The defined minimization problem has an increasing objective function and non-decreasing inequality constraint functions in all decision variables. Under these properties and the above assumption, the optimal solution of the problem lies on the boundary of the feasible region, on equality constraints. Since we aim to minimize the objective function, any increase in the values of already satisfied inequality constraints will lead to an increase in the objective function value. However, our aim is not solely to find the optimal solution, but rather to attain a feasible solution with favorable objective function value within the time limit. For efficiently solving this minimization problem, we adapt the standard penalty-based method, described in Section~\ref{sec:standard_penalty}, by iteratively updating the right-hand side of the constraints using some guardrail variable, which is based on constraint values in earlier iterations. Furthermore, by initiating the optimization with a feasible initial solution, we achieve computationally tractable optimization.

The primary contributions of our work are:
\begin{itemize}
    \item Given the previously established monotonicity properties of the objective function and inequality constraints, we propose a penalty-based guardrail algorithm (PGA), with a relatively small penalty parameter, and a feasible initial solution. 
    \item We customize the PGA to two novel application domains inspired by the optimization of the district heating system (DHS): a hand-made simplified model of the DHS and a model derived by approximating realistic model of the DHS using deep neural networks, and compiling these learned networks into optimization.
    \item The PGA significantly outperforms mathematical programming solvers and the penalty-based method, and achieves better performance and faster convergence compared to the increasing penalty dual decomposition (IPDD) algorithm within a specified time limit.
\end{itemize}
\section{Related Work}
\label{sec:related_work}
Efficiently solving large-scale optimization problems has been a long standing question in optimization as well as in other fields. Existing works study different approaches, such as formulating the problem using the mathematical programming (MP) framework and then solving it with MP solvers, or transforming a constrained optimization problem into an unconstrained one using penalty-based or augmented Lagrangian functions, and then solving it with computationally efficient first-order approaches~\cite{khajavirad2018hybrid, luenberger2016penalty, bertsekas2012dynamic}.

\paragraph{Mathematical Programming Solvers.}Off-the-shelf MP solvers require low computational times for problems with specific structures, such as linear, convex, and/or small-scale problems~\cite{luenberger1984linear}. However, numerous engineering and energy systems, such as district heating systems, are nonlinear, non-convex and of a large scale. Using branch and bound~\cite{land2010automatic}, branch and reduce~\cite{ryoo1996branch}, cutting planes~\cite{kelley1960cutting}, and outer approximation~\cite{duran1986outer} approaches, several global deterministic solvers have been developed for these problems, for example: Gurobi~\cite{gurobi}, SCIP~\cite{10.1145/3585516} and BARON~\cite{nohra2021sdp}. The performance of these solvers strongly depends on an effective use of state-of-the-art methods for linear programming, mixed-integer programming, and convex programming, at various stages in the global search~\cite{khajavirad2018hybrid}. However, for large-scale problems, the generated search trees can become arbitrarily large, resulting in high computational times~\cite{molina2024differential}.
\paragraph{Penalty-based Methods.}
Penalty-based methods approximate constrained optimization problems by unconstrained ones~\cite{luenberger2016penalty}. They have been successfully applied to numerous real-world domains, such as power flow optimization~\cite{jabr2021penalty}, unit commitment~\cite{palani2021fast} and robot manipulation~\cite{gaz2019dynamic}. Finding an effective penalty function to serve as a surrogate for missing constraints, and determining the appropriate strengths for the penalty parameters, can be challenging. As noted in~\cite{siedlecki1993constrained}, much of the difficulty arises because the optimal solution lies on the boundary of the feasible region, which is also the case in our optimization problem. 
On one hand, if the penalty parameters are large, a method finds a feasible solution, but gets stuck in a poor local minimum, a solution that satisfies constraints but with a sub-optimal value of the objective function. Moreover, the penalty function can become ill-conditioned, with large gradients and abrupt function changes~\cite{bryan2005penalty}. On the other hand, if the penalty parameters are small, a method finds a deep local minimum, a solution that violates constraints with an optimistically low value of the objective function~\cite{platt1987constrained}. Therefore, choosing and updating the penalty parameters is challenging,
and significantly impacts the quality of the obtained solution.

\paragraph{Inexact Augmented Lagrangian Methods.}Augmented Lagrangian (AL) methods have been developed to address the ill-conditioning problem of penalty methods by augmenting standard Lagrange multiplier method with a penalty term. This method was firstly proposed by~\citeauthor{hestenes1969multiplier}~[\citeyear{hestenes1969multiplier}] and \citeauthor{powell1969method}~[\citeyear{powell1969method}] and studied in detail by~\citeauthor{bertsekas2014constrained} and \citeauthor{birgin2014practical} in \citeyear{bertsekas2014constrained}. For large-scale and stochastic problems, subproblems of an AL method are often solved inexactly, with truncated iterates of the method minimizing the AL function~\cite{fernandez2012local}. Because of convergence and computational complexity analysis, many existing works use AL methods for optimization problems with linear or convex constraints~\cite{kong2019complexity,xu2021iteration,zeng2022moreau}. Under some assumptions and conditions necessary for analyzing convergence of algorithms, such as weak convexity and smooth constraints, provably efficient AL methods have been proposed for non-convex/nonlinear optimization problems with equality and inequality constraints in~\cite{sahin2019inexact,ma2020quadratically,lu2022single,lin2022complexity,boob2023stochastic}. However, these assumptions are not satisfied for our real-world physical system. Adopting milder assumptions suitable for practical applications, the increasing penalty dual decomposition (IPDD) method has been proposed in~\cite{shi2020penaltyI} for optimization problems with a non-convex objective function and equality constraints. The following four steps describe this algorithm: 1) Optimizing the AL function; 2) Checking the feasibility of constraints; 3) Updating the dual variable based on constraint violations and the penalty parameter; 4) Increasing the penalty parameter and repeating from step one until certain convergence criteria are met. The IPDD method has been successfully applied to three real-world control problems from the signal processing domain~\cite{shi2020penaltyII}. While this method is computationally efficient, it might have higher computational time compared to a penalty-based method due to the inclusion of Lagrangian terms.
\paragraph{Making the optimization computationally tractable.}Transforming constrained optimization problems into unconstrained ones using the penalty-based and AL methods enables computationally efficient optimization with algorithms based on first-order information, such as the gradient descent method. However, depending on the initial solution, this method can converge to different solutions of wildly varying quality. To address this problem, a method called the random-sampling shooting method starts optimization from various initial solutions, and selects the solution with the best performance~\cite{robust-mpc}. However, this approach can be too time consuming. 
An alternative approach involves approximating the system's dynamics with computationally tractable models, such as input convex neural networks~\cite{chen2019optimal}. This approach has been successfully applied in minimizing energy consumption of buildings~\cite{bunning2021input}, optimizing a chemical reactor~\cite{yang2021optimization} and controlling an engine airpath system~\cite{moriyasu2021structured}.
\section{The Standard Penalty-Based Method}
\label{sec:standard_penalty}
Since the optimal solution of the constrained minimization problem (\ref{eq:constrained_optimization_problem}) with inequality constraints lies on the boundary of the feasible region, we first convert the inequality constraints into equality constraints. Then, we transform this problem into an unconstrained one using the standard penalty-based method, formally described in Algorithm~\ref{alg:penalty-based-method}. This method shares the challenges associated with penalty-based methods, as discussed in the previous section. 

To approximate constraints, a quadratic regularization term is added to the objective function:  
\begin{align}
\begin{split}
    &\min_{\mathbf{u} \in U \subseteq \mathbb{R}^T} \hat{J}(\mathbf{u}), \text{where} \\
    &\hat{J}(\mathbf{u}) = \sum_{i=1}^{T} \left(J_i(u_i)+ C\left(f_i(\mathbf{u}_i)-q_i\right)^2\right),
\end{split}
\end{align}
where $\hat{J}(\mathbf{u}):\mathbb{R}^{T} \rightarrow \mathbb{R}$ is the penalty function and positive $C \in \mathbb{R}^+$ denotes the penalty parameter. 

The penalty function is inexactly solved using the gradient descent method with learning rate $\alpha$, starting from an initial solution $\mathbf{u}^0 \in U$. If, at the end of any gradient descent iteration, the decision variable falls outside the feasible set $U$, it is projected back to the nearest point in this set using the projection function $P_U$ (line~\ref{line:P_U} of Algorithm~\ref{alg:penalty-based-method}). The optimization is repeated until the absolute difference between values of decision variables from $N$ subsequent gradient descent iterations is less than the threshold $\Delta$, which defines the gradient descent (GD) stopping criterion (line~\ref{line:GD_stopping_criterion}).
\begin{algorithm}[t!]
\label{alg:penalty-based-method}
 \caption{The standard penalty-based method}
  \KwIn{Penalty parameter $C \in \mathbb{R}^+$, learning rate $\alpha \in \mathbb{R}^+$.}
  \KwInit{Initial solution $\mathbf{u}^{0} \in U$.}
  $j \leftarrow 0$\\
  Define $\hat{J}(\mathbf{u}^j)$ to be $\sum_{i=1}^{T}\left(J_i(u^j_i)+C\left(f_i(\mathbf{u}^j_i)-q_i\right)^2\right)$ \\
  \Repeat{GD stopping criterion is met \label{line:GD_stopping_criterion}}{
  $j \leftarrow j+1$ \\
   $\mathbf{u}^{j} \leftarrow P_U\left(\mathbf{u}^{j-1} -\alpha \nabla \hat{J}\left(\mathbf{u}^{j-1}\right)\right)$ \label{line:P_U}
  }
  \KwOut{Solution $\mathbf{u}^{j} \in U$.} 
\end{algorithm}
\section{The Penalty-Based Guardrail Algorithm}
Finding the optimal solution of the constrained minimization problem (\ref{eq:constrained_optimization_problem}) by approximating this problem as an unconstrained one and solving it with the gradient descent method is challenging. Our aim is to attain a feasible solution with favorable objective function value within the time limit. To obtain such a solution, this section proposes the penalty-based guardrail algorithm (PGA). 

The PGA combines the ideas of the aforementioned standard penalty-based method and iteratively updating the penalty function $\hat{J}(\mathbf{u})$ to address constraint violations across two iterations. Its inner iteration inexactly solves the penalty function using the gradient descent method. Then, its outer iteration updates the right-hand side of constraints with non-negative guardrail variables which add a margin based on scaled constraint values. To guarantee asymptotic convergence of the PGA, we scale these constraint values using a factor inversely proportional to the number of outer iterations. Algorithm~\ref{alg:pgda} formally describes the PGA.

Before proceeding to the algorithm's theoretical characteristics, we elaborate on the choice concerning the strength of the penalty parameter $C$. If the strength is large, then the solution of the inner iteration resides on or near equality constraints ($f_i(\mathbf{u}_i)-q_i \approx 0$, $i=1,\ldots,T$ in line~\ref{line:gamma} of Algorithm~\ref{alg:pgda}), but it might get trapped in a poor local minimum. However, then the update to the guardrail variable (line~\ref{line:guardrail}) has no effect. As a result, the optimization process will remain in a poor local minimum. To avoid this, we choose a small strength for the parameter $C$ across all application domains. Next, we describe and analyze theoretical properties of the PGA.
\begin{algorithm}[t!]
 \caption{The penalty-based guardrail algorithm}
 \label{alg:pgda}
  \KwIn{Penalty parameter $C \in \mathbb{R}^+$, learning rate $\alpha \in \mathbb{R}^+$.}
  \KwInit{Guardrail variable $\epsilon_i^0 \leftarrow 0$, $i=1,\ldots,T$, initial solution $\mathbf{u}^{0,0} \in U$.}
  $k \leftarrow 0$\\
  \Repeat{time limit stopping criterion is met}{
  $j \leftarrow 0$\\
  Define $\hat{J}(\mathbf{u}^{k,j})$ to be $\sum_{i=1}^{T}\left(J_i(u^{k,j}_i)+C\left(f_i(\mathbf{u}^{k,j}_i)-q_i-\epsilon^k_i\right)^2\right)$ \\
  \Repeat{GD stopping criterion is met}{ 
   $j \leftarrow j+1$\\
   $\mathbf{u}^{k, j} \leftarrow P_U\left(\mathbf{u}^{k, j-1} - \alpha\nabla \hat{J}\left(\mathbf{u}^{k,j-1}\right)\right)$\\
  }
  $k \leftarrow k+1$ \\
  \For{$i=1,\ldots,T$}{
  $\gamma^{k-1}_i \leftarrow f_i(\mathbf{u}^{k-1,j}_i)-q_i$ \label{line:gamma} \\
  $\epsilon^{k}_i \leftarrow \max(0, \epsilon^{k-1}_i - \frac{1}{k}\gamma^{k-1}_i)$ \label{line:guardrail}
  }
  }
  \KwOut{Solution $\mathbf{u}^{k,j} \in U$.}
\end{algorithm}

First, we focus on achieving computationally tractable optimization with the gradient descent method, aiming for convergence to relatively similar solutions. To facilitate this optimization, we construct non-decreasing penalty function $\hat{J}(\mathbf{u})$. For a decision variable $\mathbf{u}_i$, the following proposition holds:
\begin{proposition}
At any decision variable $\mathbf{u}_i \in U$, satisfying $f_i(\mathbf{u}_i) - q_i \geq 0$, $i=1,\ldots,T$, the penalty function $\hat{J}(\mathbf{u})$ is non-decreasing in the decision variables.
\end{proposition}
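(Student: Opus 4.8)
The plan is to prove the coordinate-wise statement by showing that every partial derivative of $\hat{J}$ is non-negative on the region where all constraints are satisfied; since a continuously differentiable function with non-negative gradient is non-decreasing in each coordinate, this yields the claim. Concretely, I would fix an arbitrary index $\ell \in \{1,\ldots,T\}$ and compute $\partial \hat{J}/\partial u_\ell$ by the chain rule. Because each $J_i$ depends only on $u_i$, and $f_i(\mathbf{u}_i)$ depends on $u_\ell$ exactly when $\ell$ lies in the moving window $\{i-n_w,\ldots,i\}$, the derivative reduces to
\[
\frac{\partial \hat{J}}{\partial u_\ell} = \frac{d J_\ell}{d u_\ell} + 2C \sum_{i} \bigl(f_i(\mathbf{u}_i)-q_i\bigr)\,\frac{\partial f_i}{\partial u_\ell},
\]
where the sum runs over those indices $i \in \{1,\ldots,T\}$ whose window contains $u_\ell$.

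Next I would verify that each contribution is non-negative using the three structural hypotheses of problem~(\ref{eq:constrained_optimization_problem}). The term $d J_\ell/d u_\ell$ is positive because each $J_i$ is increasing; within the sum, $C>0$, each $\partial f_i/\partial u_\ell \ge 0$ because $f_i$ is non-decreasing in all its arguments, and the prefactor $f_i(\mathbf{u}_i)-q_i$ is non-negative precisely by the hypothesis that we evaluate at a constraint-satisfying point. Hence every summand is a product of non-negative quantities and $\partial \hat{J}/\partial u_\ell \ge 0$. A point worth making explicit is that this sign analysis remains valid along the entire forward ray: since each $f_i$ is non-decreasing, increasing $u_\ell$ can only increase $f_i(\mathbf{u}_i)$, so the feasible region is upward-closed in each coordinate direction and the factors $f_i(\mathbf{u}_i)-q_i$ stay non-negative as $u_\ell$ grows. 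Integrating the non-negative derivative along this segment shows $\hat{J}$ does not decrease, and as $\ell$ was arbitrary the function is non-decreasing in every decision variable.

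The step I expect to be the real crux — and the reason the feasibility assumption appears in the statement — is the sign of the factor $f_i(\mathbf{u}_i)-q_i$ emerging from the squared penalty term. The derivative of $\bigl(f_i-q_i\bigr)^2$ is $2\bigl(f_i-q_i\bigr)\,\partial f_i/\partial u_\ell$, and although $\partial f_i/\partial u_\ell \ge 0$ always holds, the prefactor could be negative at an infeasible point, making the penalty term pull $\hat{J}$ downward as $u_\ell$ increases and destroying monotonicity. Restricting to the constraint-satisfying region, together with the upward-closedness noted above, is exactly what rescues the argument. The only remaining bookkeeping is the truncation of the moving windows near the endpoints $i=1$ and $i=T$, but this merely restricts which indices enter the sum and leaves the sign reasoning untouched.
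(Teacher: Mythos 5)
Your proof is correct, but it takes a genuinely different route from the paper. The paper disposes of this proposition in one sentence by invoking composition rules for monotone functions: on the region where $f_i(\mathbf{u}_i)-q_i\ge 0$, the term $\left(f_i(\mathbf{u}_i)-q_i\right)^2$ is the composition of the increasing map $t\mapsto t^2$ on $t\ge 0$ with a non-negative non-decreasing function, hence non-decreasing, and a non-negatively weighted sum of increasing and non-decreasing functions is non-decreasing (citing Boyd and Vandenberghe). You instead differentiate: you compute $\partial \hat{J}/\partial u_\ell = dJ_\ell/du_\ell + 2C\sum_i \left(f_i(\mathbf{u}_i)-q_i\right)\partial f_i/\partial u_\ell$ and check the sign of each factor. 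Both arguments are sound. The paper's version is more elementary in the sense that it needs no differentiability at all, whereas yours leans on the standing smoothness assumption; on the other hand, your version makes explicit two things the paper leaves implicit: exactly where the feasibility hypothesis enters (the sign of the prefactor $f_i(\mathbf{u}_i)-q_i$ coming out of the chain rule), and the upward-closedness of the constraint-satisfying region in each coordinate direction, which is what lets you integrate the non-negative derivative along the whole forward ray rather than only asserting a pointwise sign condition. One pedantic note: an increasing differentiable $J_\ell$ guarantees only $dJ_\ell/du_\ell\ge 0$, not $>0$ (consider $u^3$ at the origin), but non-negativity is all your argument needs, so nothing breaks.
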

The proof follows from the fact that the composition of non-negative increasing and non-decreasing functions is also a non-decreasing function, and that the non-negative sum of increasing and non-decreasing functions is also a non-decreasing function~\cite{boyd2004convex}. Therefore, we initiate the optimization in Algorithm~\ref{alg:pgda} with a feasible initial solution $\mathbf{u}^0 \in U$, where the penalty function $\hat{J}(\mathbf{u})$ is ensured to be non-decreasing. Due to this property of the function, we hypothesize that the optimization using gradient descent method will converge to relatively similar solutions, regardless of the chosen feasible initial solution. This hypothesis is later evaluated empirically.

During gradient descent optimization in the inner loop of the PGA, the penalty function remains non-decreasing until a point where the first inequality constraint is violated, $f_i(\mathbf{u}_i)-q_i<0$. The following proposition states an outcome at the minimum of the penalty function:  

\begin{proposition}
At any minimum of the penalty function $\hat{J}(\mathbf{u})$, at least one inequality constraint $f_i(\mathbf{u}_i)-q_i \geq 0$, $i=1,\ldots,T$ will be violated.    
\label{prop:infeasibility}
\end{proposition}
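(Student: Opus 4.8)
The plan is to argue by contradiction: I assume that some minimum $\mathbf{u}^\star$ of $\hat{J}$ over $U$ is feasible, i.e.\ $f_i(\mathbf{u}^\star_i) - q_i \ge 0$ for every $i=1,\ldots,T$, and then exhibit a feasible descent direction, which contradicts minimality. The workhorse is a direct computation of the partial derivatives of the penalty function. Because the moving window $\mathbf{u}_i = (u_{i-n_w},\ldots,u_i)$ only looks backward, the last decision variable $u_T$ enters exactly one constraint term, namely $f_T$, so
\begin{equation*}
\frac{\partial \hat{J}}{\partial u_T}(\mathbf{u}^\star) = J_T'(u^\star_T) + 2C\bigl(f_T(\mathbf{u}^\star_T) - q_T\bigr)\frac{\partial f_T}{\partial u_T}(\mathbf{u}^\star_T).
\end{equation*}

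First I would observe that at a feasible point every factor of the second summand is non-negative: $f_T - q_T \ge 0$ by the feasibility assumption, $\partial f_T/\partial u_T \ge 0$ because $f_T$ is non-decreasing, and $C>0$. The first summand $J_T'(u^\star_T)$ is strictly positive because $J_T$ is increasing. Hence $\partial \hat{J}/\partial u_T(\mathbf{u}^\star) > 0$, so decreasing $u_T$ strictly lowers $\hat{J}$. The same computation applied to an arbitrary coordinate $u_\ell$ again gives $\partial \hat{J}/\partial u_\ell(\mathbf{u}^\star) > 0$, since $J_\ell' > 0$ and every constraint whose window contains $u_\ell$ contributes a non-negative term. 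Thus $-e_\ell$ is a descent direction for each $\ell$, and the only way $\mathbf{u}^\star$ can fail to admit a feasible descent step is if it sits at the lower boundary of $U$ in every coordinate, i.e.\ $\mathbf{u}^\star$ is the least element $\underline{\mathbf{u}}$ of $U=\prod_i U_i$.

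To close this remaining case I would invoke the standing assumption that a solution $\mathbf{u}^{\mathrm{eq}}\in U$ of the all-equality problem exists, so $f_i(\mathbf{u}^{\mathrm{eq}}_i)=q_i$ for all $i$. Since $\underline{\mathbf{u}} \le \mathbf{u}^{\mathrm{eq}}$ componentwise and each $f_i$ is non-decreasing, I obtain $f_i(\underline{\mathbf{u}}_i) \le f_i(\mathbf{u}^{\mathrm{eq}}_i) = q_i$, i.e.\ $f_i(\underline{\mathbf{u}}_i)-q_i \le 0$ for every $i$. Combined with the feasibility assumption $f_i-q_i\ge 0$ this forces equality throughout, making $\underline{\mathbf{u}}$ coincide with the equality solution; away from this degenerate corner configuration at least one constraint is strictly violated, which is the claim.

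The main obstacle I anticipate is precisely this boundary bookkeeping: the gradient argument by itself only rules out \emph{interior} feasible minima, and a careful treatment has to exclude the corner of $U$ using monotonicity of the $f_i$ together with the existence of the equality solution. If instead one reads ``minimum'' as an interior stationary point of $\hat{J}$ — the situation gradient descent actually encounters once it has moved off the boundary — the statement is immediate: stationarity would require $\partial \hat{J}/\partial u_T(\mathbf{u}^\star)=0$, which is incompatible with the strict positivity established above unless $f_T(\mathbf{u}^\star_T)-q_T<0$, so constraint $T$ must be violated. Everything else reduces to the routine differentiation and sign-checking sketched above.
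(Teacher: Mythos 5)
Your core argument is the same as the paper's: isolate the last coordinate $u_T$, observe that it enters only the constraint term for $f_T$, and conclude from $J_T'>0$, $\partial f_T/\partial u_T\ge 0$, and $C>0$ that stationarity forces $f_T(\mathbf{u}_T^*)-q_T<0$. The paper phrases this directly via $\nabla\hat{J}(\mathbf{u}^*)=\mathbf{0}$ rather than as a contradiction, but the computation and sign-checking are identical. Where you genuinely go beyond the paper is the boundary bookkeeping for the constrained set $U$: the paper's proof silently assumes the minimum is a stationary point, which is only guaranteed in the interior of $U$, whereas you correctly note that the strict positivity of every partial derivative at a feasible point leaves open exactly one escape, namely the componentwise least element of $U$ (when it exists, which closedness of the $U_i$ alone does not guarantee). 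Your resolution of that corner case is honest but also reveals that the proposition, read literally, admits a degenerate exception: if the least element of $U$ happens to coincide with the all-equality solution, then that point is a feasible minimum with no violated constraint, so the statement fails there. This is not a flaw in your reasoning but a gap in the proposition itself; the paper avoids it only by implicitly restricting attention to interior stationary points, which is the reading under which your last paragraph gives exactly the paper's argument.
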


\begin{proof}
The gradient of the penalty function is zero at any minimum $u_i^*$:
\begin{align}
\begin{split}
& \left.\frac{\partial \hat{J}(\mathbf{u})}{\partial u_i} \right|_{u_i = u^*_i} = 0 \Rightarrow \\ & \sum_{i=1}^{T} (f_i(\mathbf{u}^*_i)-q_i) \left. \frac {\partial f_i(\mathbf{u}_i)}{\partial u_i}\right|_{u_i = u^*_i} = - \frac{1}{2C} \left.\frac{\partial J(\mathbf{u})}{\partial u_i} \right|_{u_i = u^*_i}
\end{split}
\end{align}
As the function $f_{T}(\mathbf{u}_{T})$ is the only one dependent on the decision variable at the last time step $u_{T}$, we examine the gradient of the penalty function with respect to this variable:
\begin{equation}
\resizebox{.91\linewidth}{!}{$
            \displaystyle
    (f_{T}(\mathbf{u}_{T}^*)-q_{T})\left. \frac {\partial f_{T}(\mathbf{u}_{T})}{\partial u_{T}}\right|_{u_{T}= u^*_{T}}=- \frac{1}{2C} \left.\frac{\partial J(\mathbf{u})}{\partial u_{T}} \right|_{u_{T} = u^*_{T}}
    $}
\end{equation}
Following the fact that the gradient of an increasing function is positive $\frac{\partial J(\mathbf{u})}{\partial u_{T}}>0$, the gradient of a non-decreasing function is non-negative $\frac {\partial f_{T}(\mathbf{u}_{T})}{\partial u_{T}} \geq 0$, and $C>0$, it follows that the constraint at the last time step $f_{T}(\mathbf{u}_{T}^*)-q_{T}<0$ will be violated.
\end{proof}
To address constraint violations, we propose updating the right-hand side of the constraints with non-negative guardrail variables $\epsilon_i \in \mathbb{R}_{\geq 0}$ for each $i=1,\ldots,T$ in the outer loop of the PGA, as defined in Definition~\ref{def:guardrail}. The Proposition~\ref{prop:guardrail} highlights the effect of this guardrail variable on optimized decision variables $\mathbf{u}$.
\begin{definition}
Changing the right-hand side of the constraints $q'_i = q_i + \epsilon_i$, $i=1,\ldots,T$, in the penalty function $\hat{J}(\mathbf{u})$, where $\epsilon_i$ is a guardrail variable, yields another penalty function $\hat{J}'(\mathbf{u})$.
\label{def:guardrail}
\end{definition}
\begin{proposition}
At any minimum of the penalty function $\hat{J}(\mathbf{u})$, the first step of the gradient descent method on $\hat{J}'(\mathbf{u})$ will not decrease any $u_i$, $i=1,\ldots,T$.
\label{prop:guardrail}
\end{proposition}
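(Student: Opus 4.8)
The plan is to show that a single gradient-descent step on $\hat{J}'$, started from a minimum $\mathbf{u}^*$ of $\hat{J}$, increases or leaves unchanged every coordinate. Since the (unprojected) step is $u_i^* - \alpha\,\partial\hat{J}'(\mathbf{u}^*)/\partial u_i$ with $\alpha>0$, it is equivalent and sufficient to prove that $\partial\hat{J}'(\mathbf{u}^*)/\partial u_i \le 0$ for every $i$. First I would write down the first-order optimality condition at the minimum $\mathbf{u}^*$ of the original penalty function, $\partial\hat{J}(\mathbf{u}^*)/\partial u_i = 0$, exactly in the form used in the proof of Proposition~\ref{prop:infeasibility}, and solve it for the objective gradient $\partial J_i(u_i^*)/\partial u_i$ in terms of the constraint terms $2C\sum_{\ell} (f_\ell(\mathbf{u}_\ell^*)-q_\ell)\,\partial f_\ell(\mathbf{u}_\ell^*)/\partial u_i$.

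Next I would substitute this expression for $\partial J_i/\partial u_i$ into the gradient of $\hat{J}'$. Because $\hat{J}'$ differs from $\hat{J}$ only by replacing each $q_\ell$ with $q_\ell+\epsilon_\ell$, and because the same set of constraint functions $f_\ell$ depends on $u_i$ in both penalty functions (those with $i\le\ell\le i+n_w$ under the moving-window structure), the objective-gradient terms and the $(f_\ell-q_\ell)$ terms cancel pairwise, leaving
\begin{align*}
\frac{\partial \hat{J}'(\mathbf{u}^*)}{\partial u_i} = -2C\sum_{\ell}\epsilon_\ell\,\frac{\partial f_\ell(\mathbf{u}_\ell^*)}{\partial u_i}.
\end{align*}
The sign then follows from three facts already available: $C>0$, the guardrail variables satisfy $\epsilon_\ell\ge 0$ by construction, and each $f_\ell$ is non-decreasing in all decision variables so that $\partial f_\ell/\partial u_i\ge 0$. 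Hence every summand is non-negative and the whole expression is $\le 0$, giving $u_i^* - \alpha\,\partial\hat{J}'(\mathbf{u}^*)/\partial u_i\ge u_i^*$.

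Finally I would account for the projection $P_U$ in the gradient step of Algorithm~\ref{alg:pgda}. Since $U=\prod_{i}U_i$ is a product set, $P_U$ acts coordinatewise, and because the unprojected update already satisfies $u_i^{\mathrm{new}}\ge u_i^*$ with $u_i^*\in U_i$, projecting back onto $U_i$ cannot push the coordinate below $u_i^*$; thus the projected step likewise does not decrease any $u_i$.

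The part I expect to require the most care is the bookkeeping in the cancellation step: one must verify that replacing $q_\ell$ by $q_\ell+\epsilon_\ell$ changes each squared penalty term's derivative by exactly $-2C\epsilon_\ell\,\partial f_\ell/\partial u_i$, and that the index set of constraints touching $u_i$ is identical in $\hat{J}$ and $\hat{J}'$, so that the substitution from the optimality condition is valid term-by-term. A secondary subtlety is the projection argument, which is immediate for interval-valued $U_i$ but should be stated explicitly, since the proposition concerns the actual projected gradient step used in the algorithm rather than the raw gradient update.
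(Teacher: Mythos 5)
Your proof is correct and follows essentially the same route as the paper: both arguments reduce to showing $\nabla \hat{J}'(\mathbf{u}^*) = -2C\sum_{i}\epsilon_i \nabla f_i(\mathbf{u}^*_i) \leq \mathbf{0}$ by combining stationarity of $\hat{J}$ at $\mathbf{u}^*$ with $\epsilon_i \geq 0$ and $\nabla f_i \geq \mathbf{0}$ (the paper obtains the cancellation by writing $\hat{J}' = \hat{J} + C\sum_i\bigl(\epsilon_i^2 - 2\epsilon_i(f_i(\mathbf{u}_i)-q_i)\bigr)$ and differentiating, which is the same computation you perform by substituting the optimality condition term by term). Your explicit handling of the coordinatewise projection $P_U$ is a detail the paper's proof omits, and it is a correct and worthwhile addition.
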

\begin{proof}
First, we reformulate the changed penalty function $\hat{J}'(\mathbf{u})$:
\begin{equation}
 \hat{J}'(\mathbf{u}) = \sum_{i=1}^{T}\left(J_i(u_i)+C\left(f_i(\mathbf{u}_i)-q_i-\epsilon_i\right)^2\right) 
\end{equation}
Upon expanding the term $(f_i(\mathbf{u}_i)-q_i-\epsilon_i)^2$ and subsequently re-arranging the components of the equation, we arrive at the following expression:
\begin{equation}
\hat{J}'(\mathbf{u}) =\hat{J}(\mathbf{u})+C\sum_{i=1}^{T}\left(\epsilon_i^2 - 2\epsilon_i\left(f_i(\mathbf{u}_i)-q_i\right)\right)
\end{equation}
At any minimum $\mathbf{u}^*$ of the penalty function $\hat{J}(\mathbf{u})$ holds $\nabla \hat{J}(\mathbf{u}^*)=\mathbf{0}$. Using the definition that the gradients of a non-decreasing function are non-negative, $\nabla f_i (\mathbf{u}_i) \geq \mathbf{0}$, and $\epsilon_i \geq 0$, we derive the following conclusion regarding the sign of the gradients of the penalty function $\hat{J}'(\mathbf{u})$:
\begin{equation}
\nabla \hat{J}'(\mathbf{u}^*) = \underbrace{\nabla \hat{J}(\mathbf{u}^*)}_{=\mathbf{0}}-2C\sum_{i=1}^{T} \underbrace{\epsilon_i}_{\geq 0}\underbrace{\nabla f_i(\mathbf{u}^*_i)}_{\geq \mathbf{0}} \leq \mathbf{0}    
\end{equation}
Therefore, gradient descent moves the optimized parameters $\mathbf{u}$ in the direction of the negative gradient, so no $u_i$ can decrease.
\end{proof}
If the inequality constraints in the outer iteration $k$ are violated by a margin $\gamma^k_i = f_i(\mathbf{u}^k_i)-q_i$ (line~\ref{line:gamma} of Algorithm~\ref{alg:pgda}), we increase the guardrail variable in proportion to the scaled
constraint violations $\epsilon^{k+1}_i \leftarrow \epsilon^k_i - \frac{1}{k+1}\gamma^k_i$ (line~\ref{line:guardrail}), and repeat optimization using gradient descent. According to the Proposition~\ref{prop:guardrail}, updating the guardrail variable will lead to no decrease of decision variables at the minimum of the original penalty function $\hat{J}(\mathbf{u})$. Therefore, given that the functions $f_i(\mathbf{u}_i)$
are non-decreasing, their values will either increase or in the worst case, remain constant.

However, the PGA has a caveat regarding theoretical convergence to a feasible solution. If $\gamma_i^k > 0$ in the $k$th outer iteration, we might decrease the guardrail variable in the subsequent $(k+1)$th iteration following the update rule $\epsilon^{k+1}_i \leftarrow  \max (0, \epsilon^k_i - \frac{1}{k+1}\gamma^k_i)$. This adjustment means that the guardrail variable in the $(k+1)$th iteration might be smaller than in the $k$th iteration. Consequently, the decision variable at the minimum of the penalty function in the $k$th iteration $\hat{J}(\mathbf{u}^k)$ could decrease. We empirically evaluate the convergence of the PGA for the time limit stopping criterion. 

\section{Application Domains}
To evaluate PGA on a complex real-life optimization problem, we present two completely different models for the optimization of a district heating system (DHS): a hand-made simplified model of the DHS, and a model derived by approximating the DHS using deep neural networks (DNNs), which is subsequently compiled into an optimization model.

The hand-made simplified model of the DHS makes significant, expert-derived assumptions on the realistic model of the DHS. Together these assumptions ensure meeting the monotonicity conditions
for the PGA, and computationally stable optimization using MP solvers. The next domain approximates a realistic but computationally very hard model of the DHS~\cite{li2015combined} using specific DNNs, and derives an optimization model by compiling learned networks into optimization. These DNNs facilitate monotonicity properties of the optimization problem, and computationally stable optimization using MP solvers. First, we elaborate on the realistic model of the DHS, followed by both application domains for the PGA.

\subsection{A Realistic Model of District Heating System}
\label{sec:dhs_model}
District heating systems play a crucial role in the transition to smart energy systems because of the flexibility in their electricity use~\cite{persson2019heat}. An important source of flexibility in a DHS originates from the thermal inertia of the district heating network itself, called pipeline energy storage. This flexibility can decouple heat and electricity production of the combined heat and power (CHP) production plant to minimize operational cost, while meeting the consumer's heat demand~\cite{averfalk2020economic}.

\begin{figure}[t!]
    \centering
    \includegraphics[width=1\linewidth]{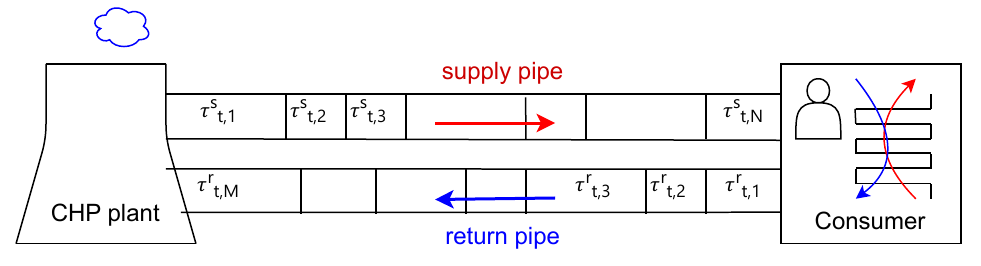}
    \caption{Structure of a district heating system.}
    \vspace*{7mm}
    \label{fig:dhs}
\end{figure}
Figure~\ref{fig:dhs} illustrates the structure of a DHS. The system consists of a CHP plant, a single consumer and a district heating network. At time step $i$ the CHP plant simultaneously produces heat $h_i$ and electricity $p_i$. This heat is used to raise the temperature of the water to a specific supply temperature $\tau^s_i$, which is then transported to the consumer via a supply pipe. The delivered heat at the end of the supply pipe should be equal to or greater than the consumer's heat demand $q_i$. After consumption, the water is cooled down and returned to the CHP plant via the return pipe. As a district heating network is usually few kilometers long, there is a delay between an increase of supply temperature at the producer and the corresponding increase of supply temperature at the consumer. This delay results in a pipeline energy storage, which charges when the supply temperature is raised and discharges when the supply temperature is lowered~\cite{merkert2019optimal}.

The aim is to minimize operational cost while meeting the consumer's heat demand by planning the production of heat and electricity at the CHP plant over an optimization horizon 
$T$ and leveraging this pipeline energy storage. Due to  rapidly changing external conditions, such as fluctuations in electricity prices, the planning should be conducted in real-time. This optimization problem corresponds to the optimization problem (\ref{eq:constrained_optimization_problem}), with the CHP operation region represented by the feasible set $U$, and the produced heat and electricity the decision variable $\mathbf{u}=(h_{1}, p_{1},\ldots, h_{T}, p_{T})$. The objective is the minimization of operational cost, i.e., the sum of the cost $a_0 h_i$ and $a_1 p_i$ for producing heat and electricity, respectively, over optimization horizon $T$: $J(h_{1}, p_{1},\ldots,h_{T},p_{T}) = \sum_{i=1}^{T} a_0 h_i+a_1p_i$. Finally, the functions $f_i$, which are highly nonlinear and non-convex, model the delivered heat based on previous and current produced heats. Their values at each time step $i$ should be equal to or greater than the consumer's heat demand $q_i$, $f_i \geq q_i$. The detailed model of the realistic DHS can be found in~\cite{li2015combined}, with the key information available in Appendix A.

The functions $f_i$ are not non-decreasing in produced heat decision variables. Therefore, the PGA can not efficiently solve the optimization problem. Moreover, the complexity of the functions $f_i$ poses challenges for computationally stable optimization using MP solvers. The following two models address these challenges differently.  
\subsection{A Simplified Model of District Heating System}
To simplify the realistic model of the DHS and reformulate functions $f_i$ into a non-decreasing form in produced heats, we assume a constant and zero ambient temperature and a zero temperature in the return pipe. While these assumptions simplify the realistic model, they still capture the time delays in the supply pipe. Then, we transform the constrained optimization problem into an unconstrained by constructing the penalty function $\hat{J}(h_{1}, p_{1},\ldots, h_{T}, p_{T})$. Detailed information about this transformation process is available in  Appendix B.

\subsection{An Optimization Model Derived from Deep Neural Networks}
In this section, we build DNNs to approximate parts of the realistic model of the DHS. Then, we compile these learned DNNs into the optimization model. This process consists of five steps: defining the structure of the optimization problem, obtaining a training dataset, learning approximate models using DNNs, constructing a mathematical model, and optimizing the model.

Firstly, to define the structure of the optimization problem, we identify which complex relations in the realistic problem to approximate using DNNs. In the DHS, this relation represents the delivered heat to the consumer, and we approximate it using the state transition function, $s_i=g(s_{i-1}, h_{i-n_w},p_{i-n_w},\ldots,h_i,p_i)$ and the system output function $y_i = f(s_i,  h_{i-n_w},p_{i-n_w},\ldots,h_i,p_i)$, $i=1,\ldots,T$, where $s_i$ stands for the state of the system at time step $i$ and $y_i$ is the delivered heat to the consumer. The system state $s_i$ consists of observations of the temperature at the inlet of the supply pipe, the temperature at the outlet of the supply pipe, and the mass flow. The defined state is the partially observable state of the DHS, and it is the approximation of the believed distribution over the state space. 

Secondly, we collect a training dataset consisting of inputs and outputs of the functions $f$ and $g$ using a simulation environment~\cite{wu2022gridpenguin}. Thirdly, we build DNNs to approximate state transition and system output functions from obtained training dataset. To model non-decreasing constraint functions, these neural networks should be non-decreasing in all decision variables. For this, we constrain their input weights to be non-negative in all decision variables, all weights between layers to be non-negative and use non-decreasing activation functions, such as rectified linear units. The size of these neural networks is particularly important since it affects the modeling capability and the computational stability of the optimization with MP solvers. For example, while larger neural networks have higher representation power, their increased complexity may comprise the computational stability of optimization.

Fourthly, we chain these fixed learned deep neural network models together for a set planning horizon $T$. The integration of these models into the optimization framework depends on the type of solver. If a MP solver is used, learned DNNs are compiled into a mixed integer linear program following framework defined in~\cite{fischetti2018deep}. If a first-order solver is used, the learned models are compiled in a recurrent neural network unrolled for the fixed planning horizon. Given objective function, defined in Section~\ref{sec:dhs_model} as $J=\sum_{i=1}^{T}a_0h_i+a_1p_i$, we construct mathematical model. Lastly, for the optimization of this model using the PGA, we formulate the penalty function  $\hat{J}(h_{1}, p_{1},\ldots, h_{T}, p_{T})$. The details of these five steps are given in the Appendix C.

While DNNs approximate the DHS in our application, they are general function approximators. As long as the architecture of the DNNs adheres to the restrictions outlined in the third step, they can be integrated into the optimization using the PGA, independently of the underlying system they approximate. This approach can thus be used for many other realistic optimization problems without requiring deep understanding of the domain to decide how to model it using non-decreasing functions. 
\section{Experimental Design}
To evaluate the proposed PGA, we compare it with a mathematical programming (MP) solver, the penalty method (PM) and the increasing penalty dual decomposition (IPDD) method across one artificial domain and two domains inspired by the DHS, as described in the previous section. Based on the characteristics of the optimization problem, the MP solver is a nonlinear programming (NLP) solver for the first and second domains and a mixed integer linear programming (MILP) solver for the third domain.

All the experiments are performed on a computer with 4-core Intel I7 8665 CPU. Concerning MP solvers, used NLP solver is SCIP 1.11.1~\cite{10.1145/3585516}, and MILP solver is Gurobi 9.5.2~\cite{gurobi}.  For the first-order solvers implemented in the PM, the IPDD method, and the PGA, we use Tensorflow version 2.13~\cite{tensorflow2015-whitepaper}. 

The artificial application domain is a basic  N-dimensional optimization problem with three inequality constraints, described in Appendix D. Concerning two domains inspired by the DHS,  the optimization is conducted over a planning horizon of 12 hours, $T=12h$. The heat and electricity price coefficients $a_0$ and $a_1$ are taken from~\cite{abdollahi2014optimization}, and their values are $a_0 = 8.1817 [\euro/h]$ and $a_1 = 38.1805 [\euro/h]$. The heat demand data is obtained from the dataset provided in~\cite{ruhnau2019time}. Further details on parameters of the DHS are available in Appendix A.

For all application domains, the penalty functions are optimized using the Adam optimizer~\cite{kingma2017adam} with a learning rate 0.01. The gradient descent stopping criterion parameters $N$ and $\Delta$ are determined empirically for each application domain, and their values are listed in Table~\ref{table:optimization_parameters}. The same parameters are used for the PM, the IPDD method, and the PGA. Appendix E provides further details on the experiments.
\begin{table}[t!]
\centering
\caption{Optimizer parameters for solving the penalty function.}
\vspace*{7mm}
\begin{tabular}{lrrr}
\toprule
Domain &  N & $\Delta$\\
\midrule
N-dimensional problem & $50$&$10^{-6}$\\
District heating system & $1000$&$10^{-1}$\\
Deep neural networks &  $1000$&$10^{-1}$\\
\bottomrule
\end{tabular}
\label{table:optimization_parameters}
\end{table}
\section{Numerical Experiments}
\begin{figure*}[t!]
\centering
\subfigure[Objective function]{
  \includegraphics[width=0.42\linewidth]{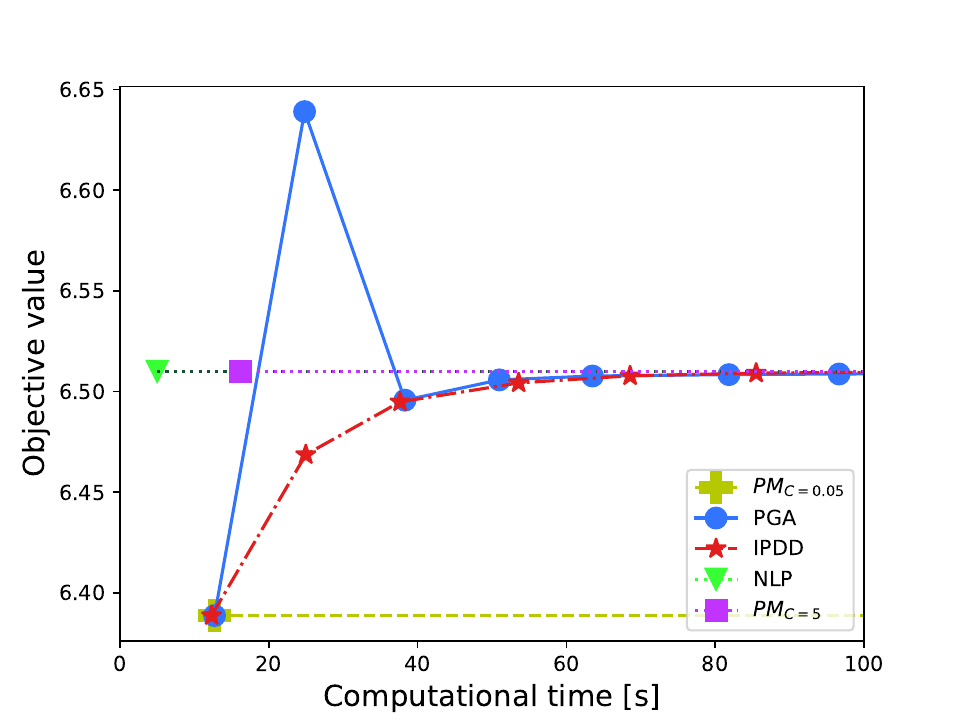}}
\subfigure[Infeasibility]{
  \includegraphics[width=0.42\linewidth]{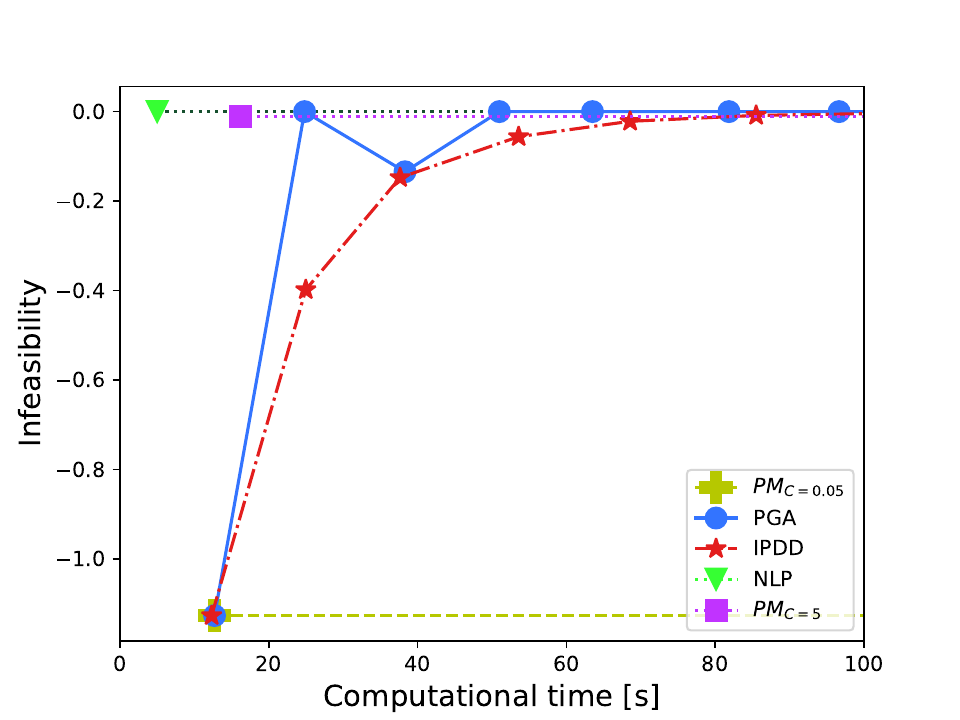}}
\caption{Comparison between the PGA, and the NLP, PM and IPDD methods for solving an N-dimensional optimization problem.}
\label{fig:toy_problem}
\end{figure*}

\begin{figure*}[t!]
\centering
\subfigure[Objective function]{
  \includegraphics[width=0.42\linewidth]{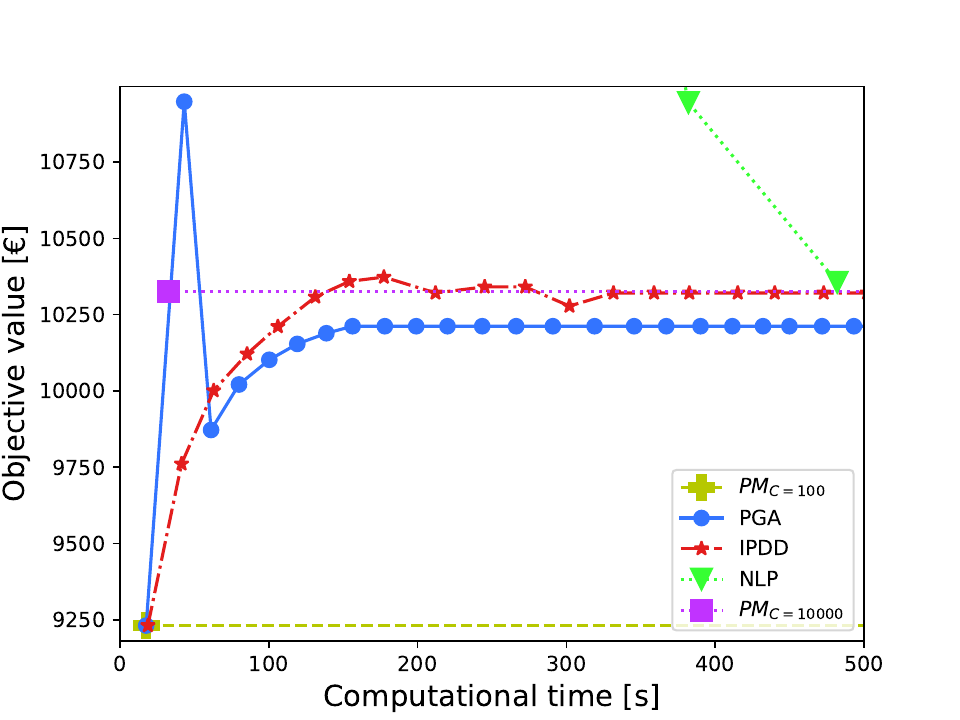}}
\subfigure[Infeasibility]{
  \includegraphics[width=0.42\linewidth]{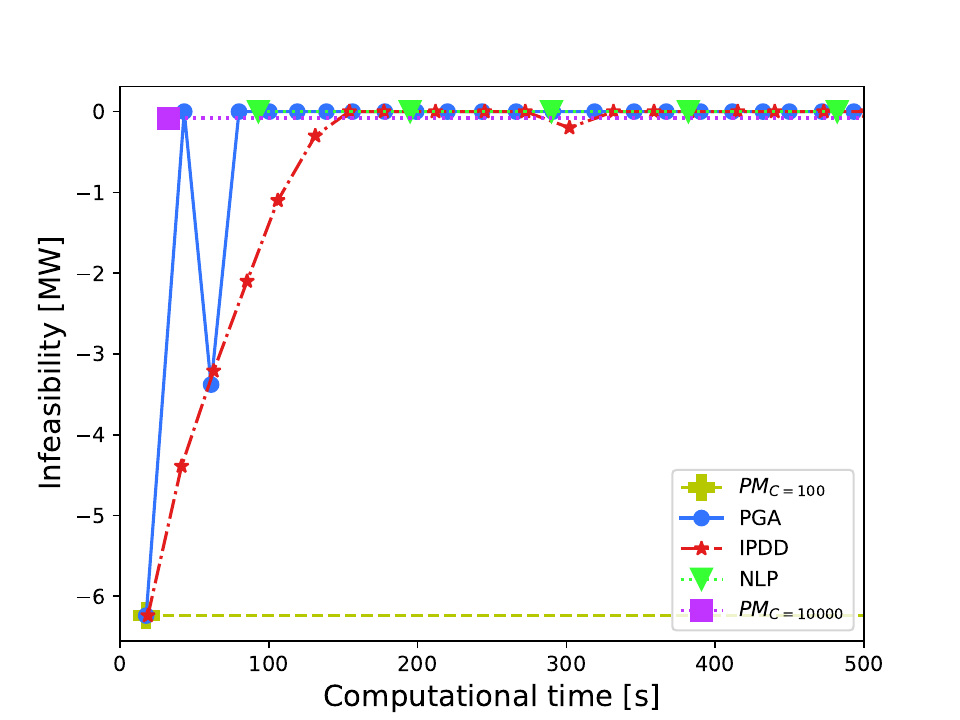}}
\caption{Comparison between the PGA, and the NLP, PM and IPDD methods for solving a simplified model of a district heating system.}
\label{fig:simplified_dhs}
\end{figure*}

\begin{figure*}[t!]
\centering
\subfigure[Objective function]{
  \includegraphics[width=0.42\linewidth]{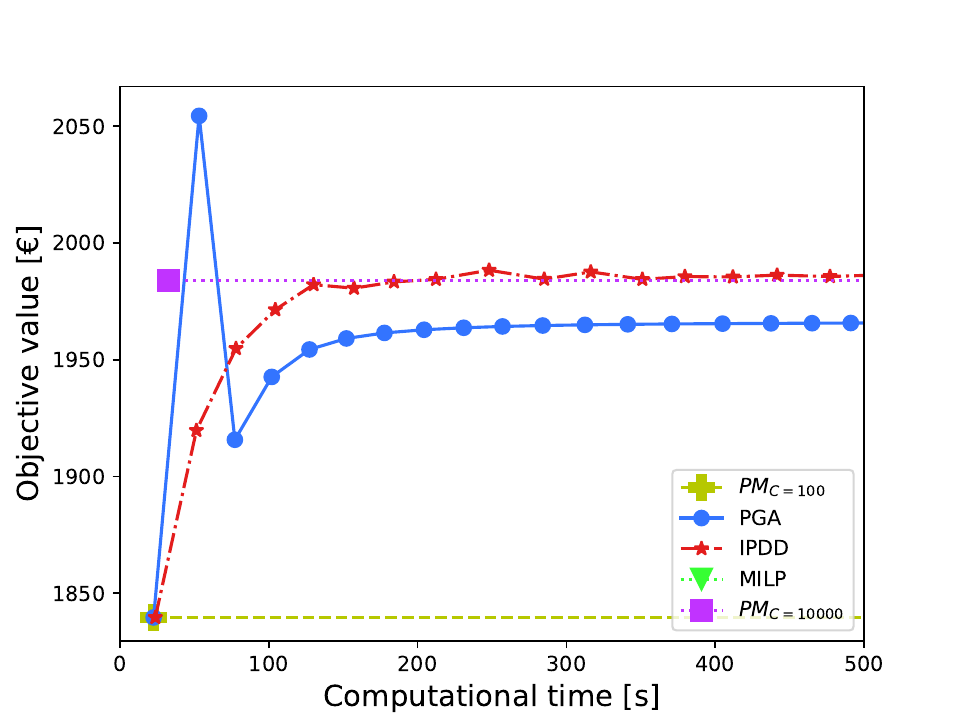}}
\subfigure[Infeasibility]{
  \includegraphics[width=0.42\linewidth]{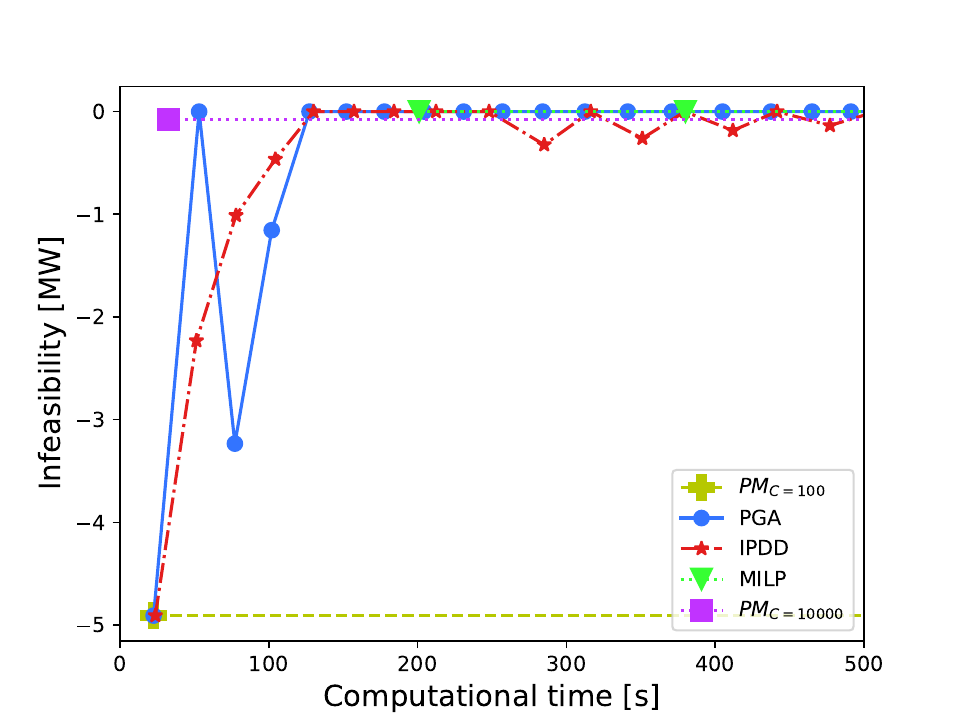}}
\caption{Comparison between the PGA, and the MILP, PM and IPDD methods for solving an optimization model derived from learned deep neural networks.}
\vspace*{7mm}
\label{fig:neural_network}
\end{figure*}
This section compares the PGA with a MP solver, the PM with two strengths of the penalty parameter and the IPDD method across the artificial domain and two domains inspired by the DHS. The IPDD method, briefly described in Section~\ref{sec:related_work}, has been successfully applied to a wide range of non-convex constrained problems~\cite{shi2020penaltyI, shi2020penaltyII}. The source code is available at~\cite{PGA}. 

Figure~\ref{fig:toy_problem} visualizes the performance of these methods on a basic N-dimensional optimization problem. In Figure~\ref{fig:simplified_dhs}, we compare the methods on a simplified model of a DHS for a day in the winter season, and in Figure~\ref{fig:neural_network} we examine their effectiveness on a model derived from DNNs for a day in the early spring season.

The $x$-axis in all figures represents the computational time of each algorithm. In the first column, the $y$-axis shows the objective value. In the second column, the $y$-axis represents infeasibility -- the value of the violated constraint $f_i(\mathbf{u}_i)-q_i$ with the largest absolute value (i.e., $\max_{i=1,\ldots,T} |f_i(\mathbf{u}_i)-q_i|$). When the solution is feasible, the value on the $y$-axis is zero. In the figures, the markers for the PGA and IPDD methods indicate their solutions at the end of each outer iteration. For the NLP and MILP solvers, the markers represent instances of their feasible solutions during the search process. 

It can be observed that for small-scale models, such as an N-dimensional optimization problem, the MP solver finds the optimal solution within seconds. However, in the case of large-scale models, such as two realistic application domains, and within a specified time limit, feasible solutions of the first-order methods, the PGA and IPDD, have lower operational costs compared to the feasible solution of the MP solver. For instance, in the application domain involving models derived from DNNs, feasible solutions of the MILP solver in the objective function plot exhibit high values, causing them to exceed the plot's bounds.

We analyze the standard PM with two strengths of the penalty parameter $C$. On one hand, if the penalty parameter is small, the method converges to a deep local minimum, i.e., an infeasible solution where the value of the objective function is optimistically low. On the other hand, if the penalty parameter is large, the method converges to a solution with an objective function value similar to that of IPDD. However, even with a large penalty parameter, the solution remains infeasible, as it slightly violates at least one constraint. This observation aligns with Proposition~\ref{prop:infeasibility}. The analysis of Figure~\ref{fig:toy_problem} shows that both PGA and IPDD methods find a feasible solution with similar objective function values. However, as visualized in Figures~\ref{fig:simplified_dhs} and~\ref{fig:neural_network}, PGA finds a feasible solution with a lower objective function value compared to the IPDD.

Additionally, in the second column in Figures~\ref{fig:toy_problem},~\ref{fig:simplified_dhs} and~\ref{fig:neural_network} it can be observed that the PGA converges faster to the feasible solution compared to the IPDD method. As the IPDD is an augmented Lagrangian method, the augmented Lagrangian function in the IPDD includes $N$ additional linear terms compared to the penalty function in the PGA, with $N$ representing the number of constraints. In Table~\ref{table:computational_time}, we show that the computational time required for a single outer iteration of the PGA is lower compared to the IPDD across all three application domains. Also, as the complexity of the domain increases, this difference becomes more pronounced.
\begin{table}[t!]
\centering
\caption{Mean and standard deviation of computational time per single outer iteration.}
\vspace*{7mm}
\begin{tabular}{lrr}
\toprule
Domain & PDA [s] & IPDD [s]\\
\midrule
N-dimensional problem &  $13.89 \pm 2.04$&$14.96 \pm 2.60$\\
District heating system & $21.38 \pm 2.81$ & $26.42 \pm 4.71$ \\
Deep neural networks & $27.78 \pm 2.65$&$34.55 \pm 6.73$ \\
\bottomrule
\end{tabular}
\label{table:computational_time}
\end{table}
\begin{table}[t!]
\centering
\caption{Maximum normalized Euclidean distance (ED) between different solutions and 1\% of the normalized difference between maximum $u_{\textit{max}}$ and minimum $u_{\textit{min}}$ values within the feasible region.}
\vspace*{7mm}
\begin{tabular}{lrr}
\toprule
Domain&ED&$0.01(u_{\textit{max}}-u_{\textit{min}})$\\
\midrule
N-dimensional problem & $10^{-5}$ &$10^{-3}$\\
District heating system & $10^{-6}$& $10^{-5}$ \\
Deep neural networks & $10^{-6}$&$10^{-4}$ \\
\bottomrule
\end{tabular}
\label{table:euclidean_distance}
\end{table}

To validate the computational tractability of the PGA, we initiate an optimization with twenty different feasible initial solutions for each domain. In Table~\ref{table:euclidean_distance}, we show the maximum Euclidean distance between the solutions to which optimization converged, normalized by the lowest objective value in the initial solution, and 1\% of the difference between maximum $u_{\textit{max}}$ and minimum $u_{\textit{min}}$ values within the feasible region $U$, normalized by the lowest objective value found in a feasible solution. This indicates that the optimization consistently converges to relatively similar solutions.

\section{Conclusion}
In this work, we propose the penalty-based guardrail algorithm (PGA) for solving minimization problems with increasing (possibly) nonlinear and non-convex objective function and non-decreasing (possibly) nonlinear and non-convex inequality constraints. The PGA finds feasible solution with favorable value of the objective function, while facilitating computationally efficient and tractable optimization, particularly important in real-world physical systems. On one artificial and two novel application domains inspired by the district heating system, the PGA significantly outperforms mathematical programming
solvers and the standard penalty-based method, and
achieves better performance and faster convergence compared to the IPDD algorithm.
\section{Acknowledgments}
This work was executed with a Topsector Energy Grant from the Ministry of Economic affairs of The Netherlands.
\bibliography{mybibfile}
\clearpage
\section*{Appendix A}
This section provides details on the realistic model of the DHS. This model consists of a combined heat and power (CHP) production plant, a single consumer, a supply pipe and a return pipe. 

At time step $i$, $i=1,\ldots,T$, the CHP plant simultaneously produces heat $h_i$ and electricity $p_i$ according to the feasible operational region $U$ in Figure~\ref{fig:chp_region}, defined in~\cite{abdollahi2014optimization}.

The objective is the minimization of operational cost, i.e., the sum of the cost $a_0 h_i$ and $a_1 p_i$ for producing heat and electricity, respectively, over optimization horizon $T$:
 \begin{equation}
 J = \sum_{i=1}^{T} a_0 h_i+a_1p_i.
 \label{eq:objective_function}
 \end{equation}

The produced heat is proportional to a water heat capacity $c$, a mass flow $\dot{m}_{s,i}$, and a temperature difference between the supply pipe $\tau^{\textit{in}}_{s,i}$ and the return pipe $\tau^\textit{out}_{r,i}$ at the producer side:
\begin{figure}[b!]
    \centering
    \includegraphics[width=1\linewidth]{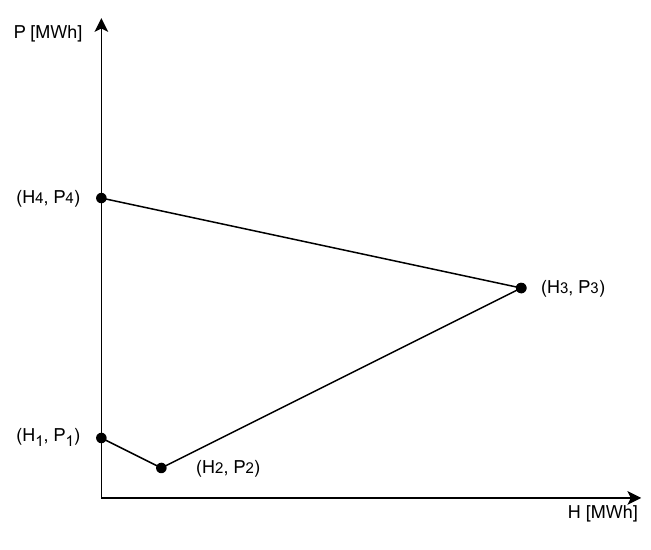}
    \caption{A combined heat and power plant produces heat (horizontal axis) and power (vertical axis) within the region defined by the four points.}
    \label{fig:chp_region}
    \vspace*{7mm}
\end{figure}
\begin{equation}
    h_i = c \dot{m}_{s,i} (\tau^{\textit{in}}_{s,i}-\tau^\textit{out}_{r,i})
\label{eq:produced_heat}
\end{equation}
Similarly, a delivered heat to the consumer $y_i$ is equal to the product of the water heat capacity, the mass flow, and a temperature difference between the supply $\tau_\textit{s,i}^\textit{out}$ and the return $\tau_\textit{r,i}^\textit{in}$ pipe. This delivered heat is modeled using functions $f_i(.)$, and defined as:
\begin{equation}
    y_i = c \dot{m}_{s,i} (\tau_\textit{s,i}^\textit{out}-\tau_\textit{r,i}^\textit{in})
\end{equation}
The delivered heat should be greater than or equal to a consumer's heat demand $q_i$:
\begin{equation}
    y_i \geq q_i
\end{equation}
The temperature changes at the pipe's outlet propagate gradually, with the outlet temperature being influenced by previous inlet temperatures and mass flows. Moreover, heat loss occurs due to the temperature difference between the water in the pipe and its surroundings, resulting in a decrease in temperature. These processes are time-dependent, nonlinear, and non-convex.

According to the node method proposed in~\cite{dhs-benonysson}, the temperature at the outlet of the pipe before the heat loss $\tau_{s,i}^{'\textit{out}}$ is defined as:

\begin{align}
\begin{split}
    & \resizebox{1\linewidth}{!}{$\tau_{s,i}^{'\textit{out}} = \frac{1}{\dot{m}_{s,i} \Delta t} \Bigg( \left(R_i-\rho A L \right) \tau_{s,i-\gamma_i}^{\textit{in}}+ \sum_{k=i-n_{w,i}+1}^{i-\gamma_i-1} \left( \dot{m}_{s,k} \Delta t \tau_{s,k}^\textit{in} \right)+$}\\
    &+ \left( \dot{m}_{s,i} \Delta t + \rho A L - S_i \right)\tau_{s,i-n_{w,i}}^\textit{in} \Bigg),
    \end{split}
\label{eq:no_temp_loss}
\end{align}
where $\Delta t$ denotes the time interval of the optimization, $\rho$ is the water density, $A$ is the surface area of the pipe and $L$ is the pipe's length. The integer variables $n_{w,i}$ and $\gamma_i$ represent the time delays in water propagation to the pipe's outlet, and variables $S_i$ and $R_i$ represent water masses flowing from the pipe's inlet to the pipe's outlet. These variables are intuitively illustrated in Figure~\ref{fig:water_flow}, adapted from~\cite{li2015combined}.

\begin{figure}[t!]
    \centering
    \includegraphics[width=1\linewidth]{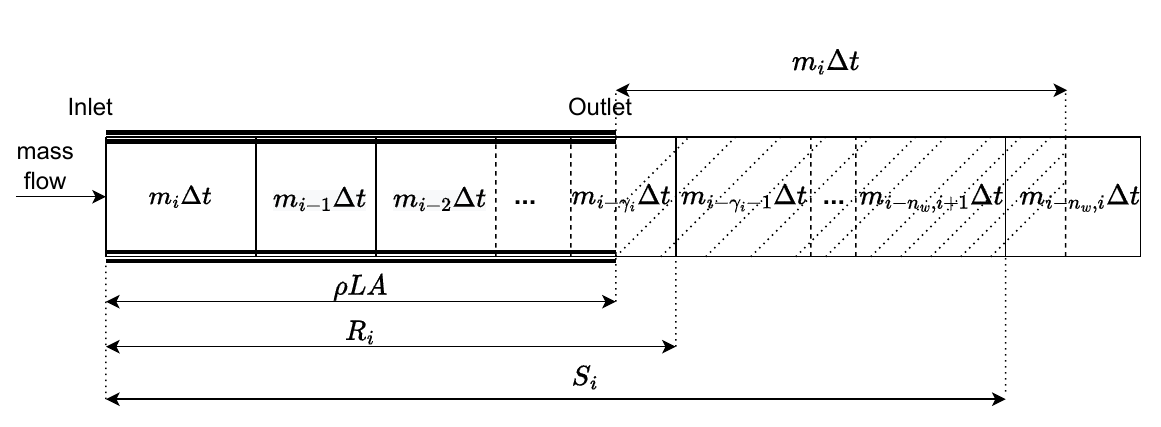}
    \caption{Water flow dynamics in the pipe of a district heating network.}
    \vspace*{7mm}
    \label{fig:water_flow}
\end{figure}
The term $i-\gamma_i$ indicates the index of the last period during which water mass outﬂows the pipe before the end of period $i$. This time delay is defined as:
\begin{equation}
    \resizebox{0.93\linewidth}{!}{$\gamma_{i} = \min_n \Biggl\{ n: \textit{s.t.} \sum_{k=0}^n \left( \dot{m}_{s,i} \Delta t \right) \geq \rho A L, n \geq 0, n \in \mathbb{Z} \Biggl\}$}
\end{equation}
 The term $i-n_{w, i}$ stands for the index of the last period during which water mass outﬂows the pipe before the end of period $i-1$, and it is defined as:
\begin{equation}
    \resizebox{0.93\linewidth}{!}{$n_{w,i} = \min_m \Biggl\{ m: \textit{s.t.} \sum_{k=1}^m \left( \dot{m}_{s,i} \Delta t \right) \geq \rho A L, m \geq 0, m \in \mathbb{Z} \Biggl\}$}
\end{equation}
The variable $R_i$ represents the mass of water flowing from period $i-\gamma_i$ to $i$:
\begin{equation}
    R_i = \sum_{k=0}^{\gamma_i} \left (\dot{m}_{s,i-k} \Delta t \right )
\end{equation}
Similarly, the variable $S_i$ represents the mass of water flowing from period $i-n_{w,i}$ to $i$:
\begin{equation}
    S_i = \begin{cases}
        \sum_{k=0}^{n_{w,i}-1} \left ( \dot{m}_{s,i-k} \Delta t \right )\,& \text{if } n_{w,i} \geq \gamma_i+1 \\
        R_i,& \text{otherwise.}
    \end{cases}
\end{equation}
Accounting for the heat loss, the temperature at the outlet of the pipe is determined as:
\begin{equation}
     \resizebox{0.93\linewidth}{!}{$\tau^\textit{out}_{s,i} = \tau_i^\textit{am} +(\tau_{s,i}^{'\textit{out}} - \tau_i^\textit{am}) exp \left ( - \frac{\lambda \Delta t}{A \rho c} \left (\gamma_i +\frac{1}{2}+\frac{S_i-R_i}{\dot{m}_{s,i} \Delta t}\right) \right)$},
\label{eq:outlet_temp}
\end{equation}
where $\lambda$ denotes heat transfer coefficient of the pipe and $\tau_i^\textit{am}$ is the ambient temperature. 

The parameters of the DHS are listed in Table~\ref{table:dhs_parameters}. The CHP points $(H_i, P_i)$ for $i \in \{1,2,3,4\}$ are taken from~\cite{abdollahi2014optimization}. The hourly heat demand data is obtained from the dataset provided in~\cite{ruhnau2019time}. The original heat demand data was scaled such that the maximum heat demand is 67 MW, while the maximum heat production of the CHP is 70 MW. This was done to sufficiently stretch the heat production, while avoiding violations resulting from close heat demand and heat production values.
\begin{table}[t!]
\centering
\caption{The district heating system parameters.}
\vspace*{7mm}
\begin{tabular}{lr}
\toprule
Parameter & Value\\
\midrule
$(H_i,P_i)[MW]$ & $(0,10),(10,5),(70,35),(0,50)$\\
$T [h]$ &$12$\\
$a_0[e/h]$ & $8.1817$ \\
$a_1[e/h]$& $38.1805$ \\
$\Delta t [h]$ & $1$\\
$L [km]$ &$4$ \\
$A [m^2]$ & $1.1$ \\
$c [J/(kg ^{\circ} C)]$ &$4181.3$\\
$\lambda [W/(m ^{\circ} C)]$ & $0.735$\\
$\rho$ & $963$ \\
\bottomrule
\end{tabular}
\label{table:dhs_parameters}
\end{table}
\section*{Appendix B}
This section provides details on constructing a penalty function for a simplified model of the DHS. To construct a delivered heat function that is non-decreasing in the produced heat decision variables and facilitates computationally stable optimization, we assume a constant and zero ambient temperature and a zero temperature in the return pipe. Under these assumptions, by expressing the mass flow from equation (\ref{eq:produced_heat}), and substituting the outlet temperature without heat loss as defined in equation (\ref{eq:no_temp_loss}) into equation (\ref{eq:outlet_temp}), we derive the following expression for the delivered heat function:

\begin{align}
\begin{split}
    &y_i =  c \Bigg ( \left(R_i-\rho A L \right) \tau_{s,i-\gamma_i}^{\textit{in}}+\sum_{k=i-n_{w,i}+1}^{i-\gamma_i-1} \frac{h_k}{c}+\nonumber\\& +\left ( \frac{h_i}{c \tau_i^\textit{in}} +\rho A L - S_i \right ) \tau_{i-n_{w,i}}^\textit{in} \Bigg )\\&exp \left ( - \frac{\lambda \Delta t}{A \rho c} \left (\gamma_i +\frac{1}{2}+\frac{S_i-R_i}{\dot{m}_{s,i} \Delta t}\right) \right)
\end{split}
\end{align}
Finally, we define the penalty function $\hat{J}$ as:
\begin{equation}
    \hat{J} = J+ C \sum_{i=1}^T\left( y_i-q_i\right)^2
\end{equation}

\section*{Appendix C}
In this section, we detail constructing a penalty function for a model of the DHS derived from learned deep neural networks. The DHS with parameters defined in Appendix A is also used in this application domain.
The process of building an optimization model, illustrated in Figure~\ref{fig:empirical_learning}, consists of five steps.

\begin{figure}[t!]
\centering
\includegraphics[width=1\linewidth]{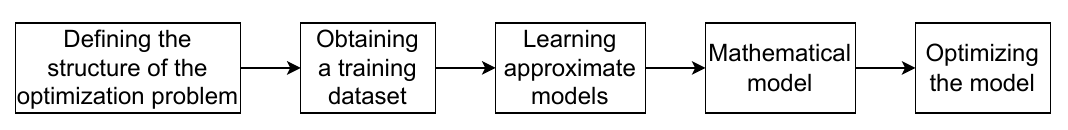}
\caption{A model derived from deep neural networks.}
\vspace*{7mm}
\label{fig:empirical_learning}
\end{figure}

In the first step, we identify processes to approximate using deep neural networks, and inputs and outputs of these networks. We approximate the state transition function $g$ modeling the transition from the previous state and previous decision variables to the current state, $s_i = g(s_{i-1}, h_{i-n_w},p_{i-n_w,}\ldots,h_i,p_i)$, $i=1,\ldots,T$, and the system output function $f$ modeling the transition from the current state and previous decision variables to the system output, delivered heat to the consumer, $y_i = f(s_i,h_{i-n_w},p_{i-n_w},\ldots, h_i,p_i)$. Therefore, the delivered heat to the consumer is the composition of functions $g$ and $f$. The state of the system $s_i$ is partially observable, and includes observations of the temperature at the inlet and outlet of the supply pipe and the mass flow, $s_i = \left(\tau^{\textit{in}}_{s,i}, \tau^{\textit{out}}_{s,i}, \dot{m}_{s,i}\right)$. The parameter $n_w$, representing the number of previously produced heats that influence the current state and delivered heat, is empirically determined to be eleven for a pipe length of $4km$.

In the second step, we generate the data for training deep neural networks modeling the state transition $g$ and system output $f$ functions. For the heat demand, we use hourly dataset from~\cite{ruhnau2019time}, covering a five-year period from 2015 to 2019 during the heating season (mid-November to mid-March). Firstly, the data for warm-up training is generated using a fast linear optimizer~\cite{abdollahi2014optimization,gu2017modeling} with the GridPenguin simulator~\cite{wu2022gridpenguin}. This dataset has 1000 rows. Then, the training data is obtained using mixed integer linear optimizer~\cite{giraud2017optimal} with the same simulator. This dataset is then divided, allocating 80\% for training (10535 rows) and 20\% for testing (2634 rows).

The third step involves learning deep neural networks using the obtained dataset. For this, we first normalize the input and output data of the neural networks to a range between zero and one. To model non-decreasing constraint functions, these neural networks should be non-decreasing in all decision variables. This is achieved by constraining their input weights to be non-negative in all decision variables, all weights between layers to be non-negative and using non-decreasing activation functions, such as rectified linear units.

The size of these neural networks is particularly important since it affects the modeling capability and the computational time of the optimization algorithm. To determine suitable size, we train nine neural networks differing in the number of between layers and the number of neurons per between layers. The configurations of these neural networks are $[1]$,$[1,1]$,$[3]$,$[5]$,$[5,3]$,$[10]$,$[10,10]$,$[50,50]$,$[100,100,100]$. In these configurations, the first number indicates the number of neurons in the first between layer, the second number represents the number of neurons in the second between layer, and so on. In Figure~\ref{fig:predictions}, we plot the root mean squared error for one-step predictions on a testing dataset, for temperatures at the pipe's inlet and outlet, the mass flow, and the delivered heat to the consumer. The ranges of these variables are: inlet temperature $\tau_s^\textit{in} \in[70^{\circ} C,120^{\circ} C]$, outlet temperature $\tau_s^\textit{out} \in[70^{\circ} C,115^{\circ} C]$, mass flow $\dot{m}_s \in [5kg/s, 810kg/s]$ and delivered heat $y \in[5 MW,65MW]$.

\begin{figure*}[t!]
\centering
\subfigure[Temperature at the inlet of the supply pipe.]{
  \includegraphics[width=0.42\linewidth]{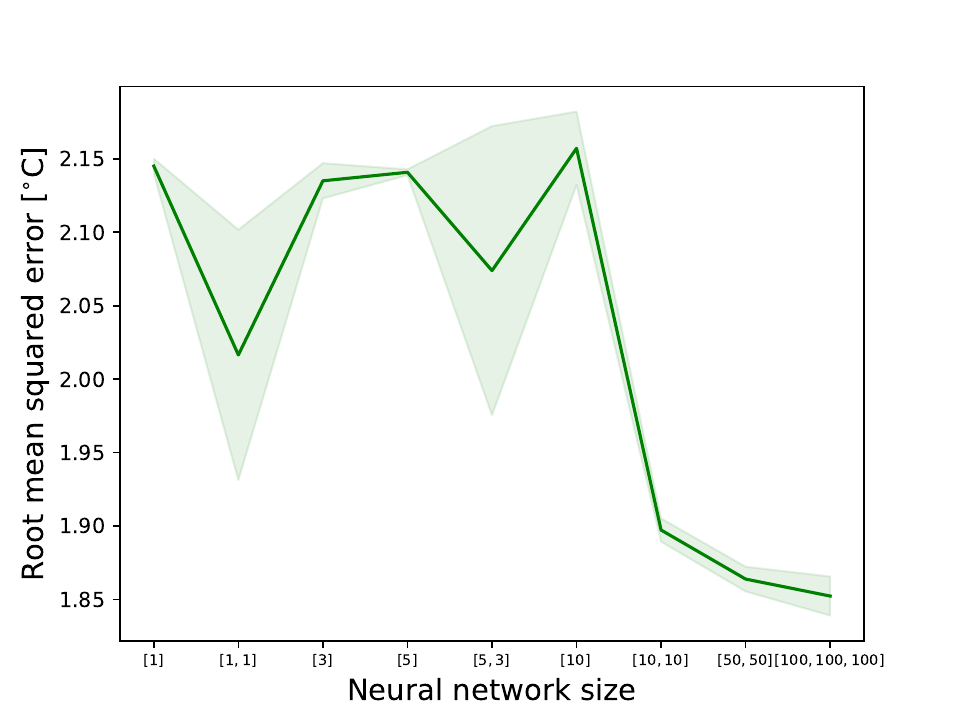 }}
\subfigure[Temperature at the outlet of the supply pipe.]{
  \includegraphics[width=0.42\linewidth]{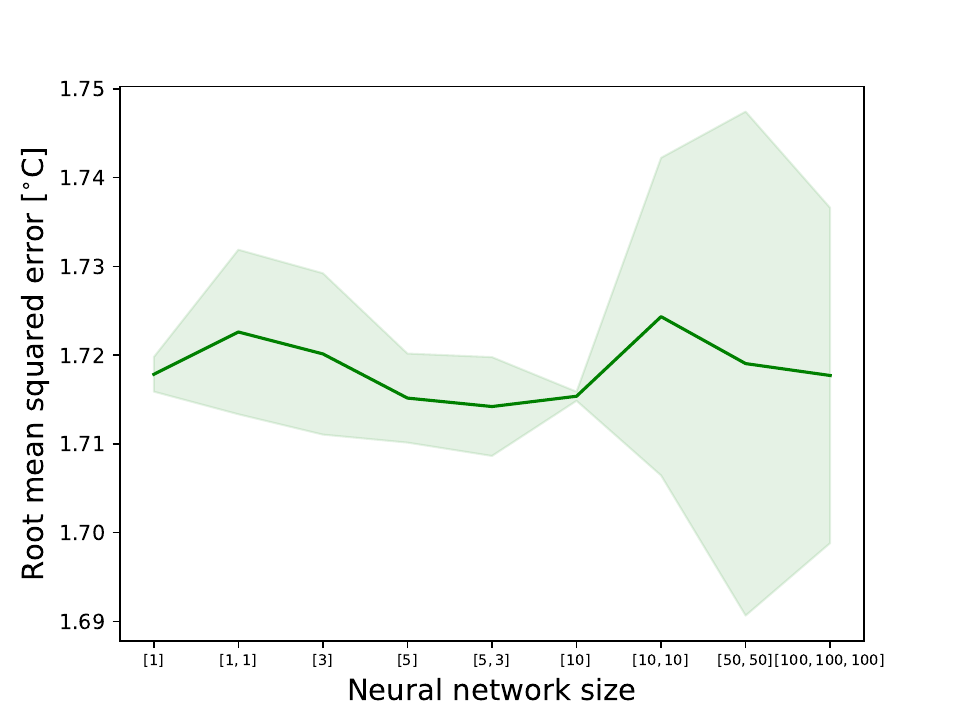}}

\subfigure[Mass flow.]{
  \includegraphics[width=0.42\linewidth]{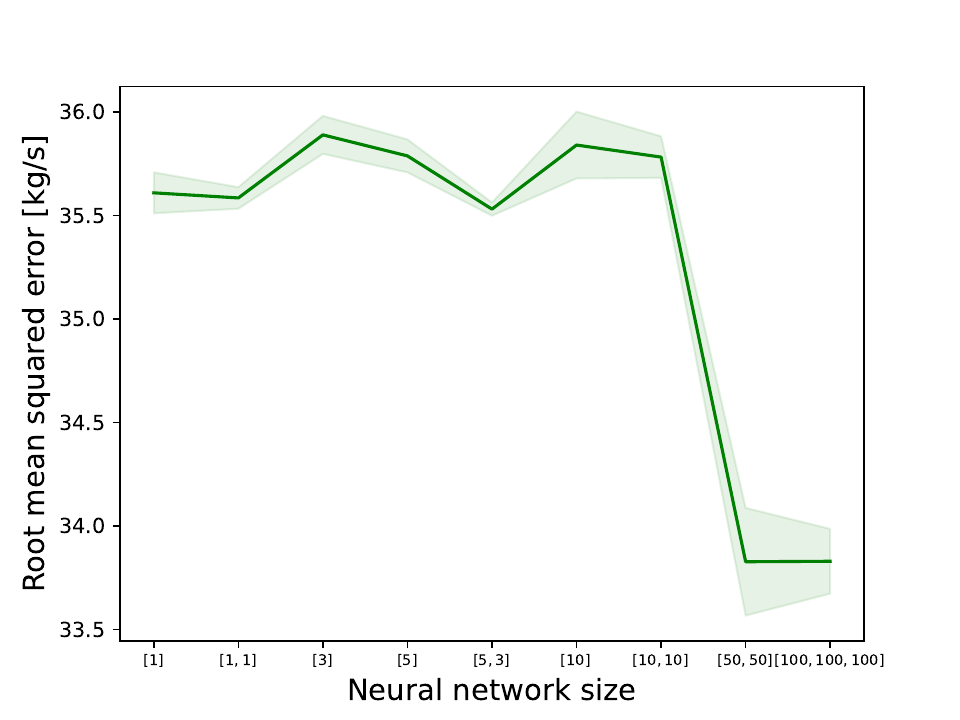}}
\subfigure[Delivered heat.]{
  \includegraphics[width=0.42\linewidth]{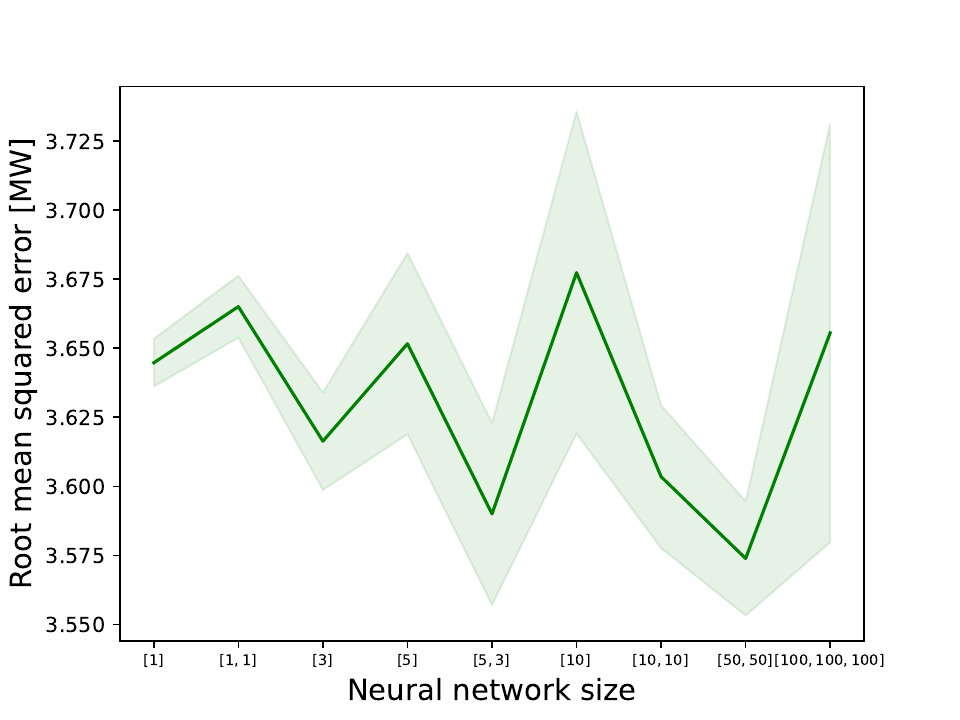} 
}
\caption{Root mean squared errors (y-axis) of one-step predictions with standard deviation over five repetitions for different neural network sizes (x-axis).}
\vspace*{7mm}
\label{fig:predictions}
\end{figure*}
Given its low prediction error for inlet temperature, mass flow, and delivered heat variables, we selected the neural network of size 
$[50,50]$ to approximate both the state transition function $g$ and the system output function $f$. For training these deep neural networks, we use the Adam optimizer ~\cite{kingma2017adam} with a learning rate of 0.001. Other hyperparameters, such as the maximum number of training epochs $M$, and the early stopping parameters -- delta $\hat{\Delta}$ and patience $\hat{M}$ proposed in~\cite{bengio2012practical}, are empirically determined. For the state transition function, their values are $M=3000$, $\hat{\Delta}=10^{-6}$, $\hat{M}=200$. For the system output function, their values are $M=1000$, $\hat{\Delta}=5\cdot 10^{-6}$, $\hat{M}=35$.

In the fourth step, we chain these fixed learned deep neural network models together for a set planning horizon $T$. For the optimization using the PGA, these learned models are compiled in a recurrent neural network unrolled for the fixed planning horizon. As the delivered heat to the consumer $y_i$ is the composition of functions $f$ and $g$, it is defined as:
\begin{align}
\begin{split}
    &y_i = f(g(g(g(..g()), h_{i-n_w-1},p_{i-n_w-1},\ldots, h_{i-1}, p_{i-1}), \\& h_{i-n_w},p_{i-n_w},\ldots, h_i, p_i), h_{i-n_w},p_{i-n_w},\ldots, h_i, p_i),
    \end{split}
\end{align}
where the number of functions $g$ in the composition is equal to time step $i$.

Similarly to the previous application domain, the penalty function is formulated as follows:
\begin{equation}
    \hat{J} = J + C \sum_{i=1}^T (y_i - q_i)^2,
\end{equation}
where the objective function $J$ is defined in equation (\ref{eq:objective_function}).

\section*{Appendix D}
An N-dimensional artificial application domain has three decision variables $x$,$y$, $z$ and it is characterized by the following objective function:
\begin{equation}
    J = x+y+z
\end{equation}
And the following constraints:
\begin{equation}
    \begin{split}
        e^{0.1+0.75x} \geq 15\\
        e^{0.05+x+0.5y} \geq 100\\
        e^{0.1x+0.5y+z} \geq 10
    \end{split}
\end{equation}
\section*{Appendix E}
In this section, we present details on the experiments concerning solving penalty functions for an N-dimensional problem, a simplified model of the DHS, and a model derived from deep neural networks. 

To assess the computational tractability, we randomly initiate the optimization with twenty feasible initial solutions for each domain. These initial solutions are listed in Table~\ref{tab:initial_solutions}.

\begin{table*}[t!]
\caption{Feasible initial solutions per application domain.}
\vspace*{7mm}
\begin{tabular}{rrr}
\toprule
 N-dimensional problem&District heating system [MW]&Deep neural networks [MW]\\
\midrule
$(4,2,2)$&$(66, 68, 60, 65, 64, 60, 70, 65, 62, 64, 66, 70)$&$(49, 67, 50, 68, 68, 43, 66, 55, 56, 47, 40, 32)$\\
$(4,3,2)$&$(69, 63, 61, 63, 70, 65, 68, 62, 62, 68, 61, 64)$&$(67, 34, 52, 49, 48, 31, 34, 34, 54, 62, 65, 59)$\\
$(4,4,2)$&$(62, 69, 64, 63, 70, 60, 62, 64, 60, 65, 63, 67)$&$(32, 40, 50, 58, 64, 36, 56, 65, 45, 32, 68, 34)$\\
$(4,5,2)$&$(70, 68, 70, 65, 61, 66, 69, 60, 60, 66, 65, 69)$&$(47, 42, 38, 65, 48, 42, 47, 31, 38, 34, 35, 37)$\\
$(4,5,3)$&$(64, 66, 67, 62, 64, 65, 68, 64, 64, 66, 64, 63)$&$(66, 34, 56, 55, 52, 42, 55, 61, 53, 35, 62, 61)$\\
$(4,3,4)$&$(63, 64, 70, 62, 70, 64, 61, 61, 68, 62, 69, 63)$&$(68, 51, 45, 52, 48, 59, 65, 36, 69, 49, 60, 50)$\\
$(4,3,5)$&$(60, 67, 66, 68, 65, 62, 60, 62, 65, 66, 63, 64)$&$(33, 57, 69, 52, 64, 59, 56, 59, 48, 38, 43, 34)$\\
$(4,4,4)$ &$(67, 67, 63, 68, 68, 70, 68, 69, 65, 61, 65, 60)$&$(31, 44, 66, 68, 44, 53, 31, 56, 61, 38, 43, 50)$\\
$(4,5,4)$ &$(67, 60, 68, 65, 69, 66, 62, 70, 66, 65, 62, 62)$&$(68, 42, 31, 35, 59, 52, 62, 33, 31, 70, 38, 54)$\\
$(4,4,5)$ &$(70, 70, 69, 65, 61, 65, 66, 64, 68, 64, 66, 70)$&$(67, 62, 60, 62, 63, 40, 69, 45, 31, 54, 40, 59)$ \\
$(4,5,5)$ &$(62, 64, 63, 67, 68, 63, 69, 62, 65, 67, 65, 64)$& $(45, 50, 58, 44, 44, 40, 37, 70, 43, 68, 35, 35)$\\
$(5,3,4)$&$(64, 60, 64, 60, 70, 66, 69, 70, 65, 70, 70, 67)$& $(69, 57, 43, 61, 39, 31, 49, 30, 49, 48, 30, 31)$\\
$(5,4,4)$ &$(64, 65, 65, 69, 67, 69, 68, 69, 62, 69, 66, 61)$& $(66, 41, 45, 63, 64, 50, 53, 46, 40, 58, 56, 70)$\\
$(5,5,4)$ &$(67, 64, 64, 70, 65, 60, 63, 70, 65, 66, 61, 66)$&$(52, 70, 44, 50, 64, 66, 31, 53, 50, 44, 34, 54)$ \\
$(5,4,5)$ &$(67, 67, 60, 69, 68, 68, 68, 60, 70, 69, 62, 70)$& $(36, 38, 53, 38, 33, 44, 65, 32, 35, 60, 46, 30)$\\
$(5,5,5)$ &$(66, 69, 68, 60, 66, 70, 65, 61, 60, 60, 62, 69)$&$(52, 31, 67, 30, 31, 38, 47, 70, 50, 70, 30, 49)$\\
$(5,6,6)$ &$(70, 70, 68, 63, 68, 63, 60, 61, 61, 66, 70, 62)$&$(66, 59, 60, 33, 40, 42, 62, 32, 51, 53, 57, 37)$\\
$(6,5,5)$ &$(64, 69, 61, 66, 61, 68, 60, 69, 60, 61, 69, 66)$&$(49, 33, 63, 43, 32, 49, 69, 51, 48, 45, 53, 58)$\\
$(5,6,5)$ &$(66, 70, 62, 65, 61, 66, 69, 68, 61, 68, 62, 62)$&$(69, 48, 62, 45, 50, 66, 68, 66, 46, 69, 58, 55)$\\
$(6,5,6)$&$(62, 69, 65, 64, 60, 65, 60, 62, 65, 61, 65, 66)$&$(53, 62, 69, 35, 38, 54, 69, 51, 62, 38, 42, 33)$\\
\bottomrule
\end{tabular}%
\label{tab:initial_solutions}
\end{table*}
A simplified model of the DHS optimizes twelve hours of a typical day during winter season, while the model derived from learned deep neural networks optimizes twelve hours of a day during early spring season. Heat demands of these days are plotted in Figure~\ref{fig:heat_demands}. Their values influence both the operational cost and feasible initial solutions. In Figures 3 and 4 in the main paper, it can be observed that the operational cost acquired during the winter is significantly higher. In Table~\ref{tab:initial_solutions}, it can be observed that the range of values for initial solutions is smaller for the winter day, due to higher heat demands. 
\begin{figure*}[t!]
\centering
\subfigure[Heat demand during twelve hours of winter season.]{
  \includegraphics[width=0.42\linewidth]{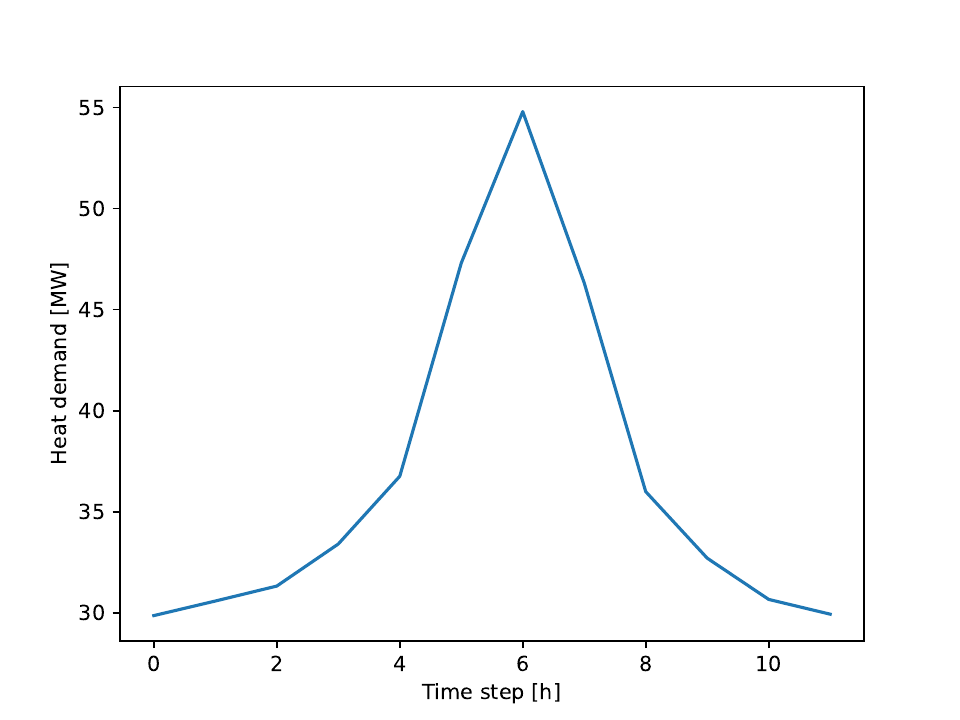}}
\subfigure[Heat demand during twelve hours of early spring season.]{
  \includegraphics[width=0.42\linewidth]{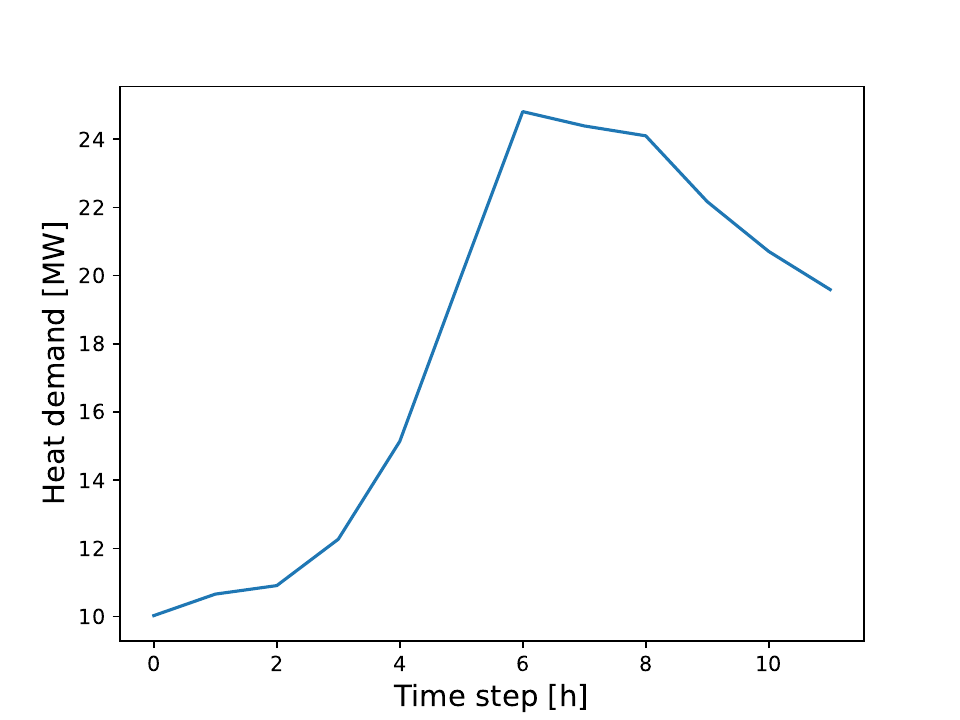}}
\caption{Heat demands.}
\vspace*{7mm}
\label{fig:heat_demands}
\end{figure*}

Finding the right strength for parameter C is challenging. For a large strength, the optimization converges to a poor local minimum, a solution that resides on or near constraints but with a sub-optimal value of the objective function. As constraint values are near zero, the update to the guardrail variable has no effect. Therefore, the optimization will remain in a poor local minimum. For a small value of parameter C, the optimization converges to a deep local minimum, a solution that violates constraints with an optimistically low value of the objective function. Using the guardrail variable, we recover from these constraint violations. However, if parameter C is too small, addressing violations might require high computational time. In Tables~\ref{table:param_C_N},~\ref{table:param_C_simplified_DHS} and~\ref{table:param_C_dnn}, we evaluate the objective function values and the maximum absolute value of violated constraints,$\gamma_\textit{max}$, in the solutions of the first outer iteration. This evaluation is performed for a small sample of randomly selected strengths of parameter C, when solving an N-dimensional optimization problem, a simplified model of a DHS and a model derived from deep neural networks, respectively.

\begin{table}[t!]
\centering
\caption{Objective function and constraint values for parameter C when solving an N-dimensional optimization problem.}
\vspace*{7mm}
\begin{tabular}{lrr}
\toprule
C & J & $\gamma_\textit{max}$\\
\midrule
$0.0005$&$-1.47$&$-21$ \\
$0.05$&$6.39$& $-1.12$\\
$5$&$6.51$&$-0.01$ \\
\bottomrule
\end{tabular}
\label{table:param_C_N}
\end{table}

\begin{table}[t!]
\centering
\caption{Objective function and constraint values for parameter C when solving a simplified model of a district heating system.}
\vspace*{7mm}
\begin{tabular}{lrr}
\toprule
C & J [\euro] & $\gamma_\textit{max}$ [MW]\\
\midrule
$1$&$2973$&$-32.71$ \\
$100$&$9231$& $-6.24$\\
$10000$&$10326$&$-0.08$ \\
\bottomrule
\end{tabular}
\label{table:param_C_dnn}
\end{table}

\begin{table}[t!]
\centering
\caption{Objective function and constraint values for parameter C when solving an optimization model derived from learned deep neural networks.}
\vspace*{7mm}
\begin{tabular}{lrr}
\toprule
C & J [\euro] & $\gamma_\textit{max}$ [MW]\\
\midrule
$1$&$485$&$-26.85$ \\
$100$&$1840$& $-4.91$\\
$10000$&$1984$&$-0.06$ \\
\bottomrule
\end{tabular}
\label{table:param_C_simplified_DHS}
\end{table}
Selected strengths for parameter C are 0.05, 100 and 100 for an N-dimensional problem, a simplified model of a district heating system and a model derived from deep neural networks, respectively, as they do not result in large constraint violations nor in satisfied constraints in the first outer iteration.
\end{document}